\documentclass[reqno,11pt]{amsart}
\oddsidemargin9mm
\evensidemargin9mm 
\textwidth14.6cm

\usepackage{amssymb}
\usepackage{amsmath}
\usepackage{amsthm}
\usepackage{mathtools}
\usepackage{mathrsfs}
\usepackage{graphicx}

\parskip2mm


\newtheorem{Th}{Theorem}[section]
\newtheorem{Lem}[Th]{Lemma}
\newtheorem{Prop}[Th]{Proposition}

\newtheorem{Def}[Th]{Definition}
\newtheorem{Prob}[Th]{Problem}
\newtheorem{Rem}[Th]{Remark}
\newtheorem{Assump}[Th]{Assumption}

\renewcommand{\theequation}{\arabic{section}.\arabic{equation}}

\newcommand{\R}{{\mathbb R}}

\newcommand{\N}{{\mathbb N}}
\newcommand{\Sym}{{\mathcal S}}

\newcommand{\ov}[1]{\overline{#1}}
\newcommand{\Tp}{{\mathrm T}}
\newcommand{\tr}{\operatorname{tr}}
\newcommand{\diag}{\operatorname{diag}}
\newcommand{\defeq}{\coloneqq}
\newcommand{\prz}[2]{ \frac{\partial{#1}}{\partial{#2}} }
\newcommand{\prd}[2]{ \frac{d{#1}}{d{#2}} }

\newcommand{\dvg}{\operatorname{div}}

\newcommand{\dt}{{\Delta t}}
\newcommand{\E}{{\mathcal{E}}}
\newcommand{\Lo}{{L^2(\Omega)}}

\newcommand{\bm}[1]{\boldsymbol{#1}}
\newcommand{\Hv}{{\bm{H}}}
\newcommand{\Vv}{{\bm{V}}}
\newcommand{\Hs}{{H}}

\newcommand{\blaket}[2]{ \left\langle {#1},{#2}\right\rangle_{\!\Vv^*,\Vv} }
\newcommand{\inner}[3]{ \left( {#1},{#2}\right)_{#3} }

\newcommand{\Pop}[1]{\mathcal{P}_{#1}}
\newcommand{\sigmas}[1]{{\sigma_{#1}^*}}

\newcommand{\tsigma}[1]{{p_{#1}}}
\newcommand{\eps}{\varepsilon}

\numberwithin{equation}{section}

\makeatletter
\newcommand*\bigcdot{\mathpalette\bigcdot@{.5}}
\newcommand*\bigcdot@[2]{\mathbin{\vcenter{\hbox{\scalebox{#2}{$\m@th#1\bullet$}}}}}
\makeatother

\makeatletter
\@namedef{subjclassname@2020}{\textup{2020} Mathematics Subject Classification}
\makeatother


\begin{document}
\title[Projection scheme for a perfect plasticity model]{Projection scheme for a perfect plasticity model with a time-dependent constraint set}

\author{Yoshiho Akagawa}
  \address[Yoshiho Akagawa]{National Institute of Technology (KOSEN), Gifu College, 2236-2 Motosu-shi, Gifu 501-0495, Japan.}
\email{akagawa@gifu-nct.ac.jp}

\author{Kazunori Matsui}
\address[Kazunori Matsui]{Department of Logistics and Information Engineering, Tokyo University of Marine Science and Technology, 2-1-6 Etchujima, Koto-ku,  Tokyo 135-0044, Japan.}
\email{kmat002@kaiyodai.ac.jp}
\urladdr{https://sites.google.com/site/kazunoriweb/home}

\subjclass[2020]{34A60, 
  35K61, 
  35D30, 
  65M12, 
  74C05 
}

\keywords{Subdifferential, Time-dependent evolution inclusion, Variational inequality, Perfect plasticity, Projection method}   

\begin{abstract}
  This paper introduces a new numerical scheme for a system that includes evolution equations describing a perfect plasticity model with a time-dependent yield surface. We demonstrate that the solution to the proposed scheme is stable under suitable norms. Moreover, the stability leads to the existence of an exact solution, and we also prove that the solution to the proposed scheme converges strongly to the exact solution under suitable norms.
\end{abstract}

\maketitle


\section{Introduction}
\label{sec:intro}

When a force is applied to materials like metals, they undergo elastic deformation and then shift to plastic deformation. This behavior is described by a relation between stress and strain. In engineering, the following simple model of perfect plasticity is often used \cite{DL76}:
\begin{align}\label{incl:perfect}
    \sigma = C(\E(u) - \eps_p),\qquad
    \prz{\eps_p}{t} \in \partial I_K(\sigma),
\end{align}
where
    $\sigma\in\Sym_d$ ($2 \le d \in\N$) is the stress,
    $\Sym_d$ is the space of symmetric matrices of order $d$,
    $C=(C_{ijkl})_{i,j,k,l}$ is the fourth-order elasticity tensor,
    $u\in\R^d$ is the displacement,
    $\E(u)\defeq(\nabla u + (\nabla u)^\Tp)/2\in\Sym_d$ is the strain,
    $\eps_p\in\Sym_d$ is the plastic part of $\E(u)$,
    $K\subset\Sym_d$ is a given closed convex set,
    $I_K$ is the indicator function on $K$,
    and $\partial I_K$ is the subdifferential of $I_K$.
The set $K$ is called the constraint set, and the boundary of $K$ is known as the yield surface.
In this paper, we consider the von Mises yield surface (yield criterion) \cite{DL76}:
\begin{align*}
    K \defeq \tilde{K}-p,\quad
    \tilde{K} \defeq \{\tau\in\Sym_d : |\tau^D|\le g\},
\end{align*}
where
    $\tau^D \defeq \tau-((\tr\tau)/d) E_d$ is the deviatoric part of $\tau$,
    $|\cdot|$ is the Frobenius norm for matrices,
    $E_d$ is the identity matrix of size $d$,
    and $p\in\Sym_d$ and $g\in\R$ are given.
This paper adopts the settings of \cite{AFK23}, where $g$ depends on time (and space) (cf. \cite{BBW15,NS22}). See Definition \ref{Def:Prob} for the details of $K$ and $g$. The equation \eqref{incl:perfect} is closely related to the Moreau sweeping process \cite{Mor71, Mor77}. The Moreau sweeping process is a problem of finding $\sigma:[0,T]\rightarrow H$ in a real Hilbert space $H$ with $T>0$, where $K(t)$ is a time-dependent closed convex constraint, and
\[
	\prd{\sigma}{t} \in - \partial I_{K}(\sigma)
    \quad \mbox{in }H
\]
is satisfied. This problem, modeling dynamic behavior under time-dependent constraints, has been studied in various scenarios by numerous researchers (refer to \cite{BKS06, KL02, KM09, KR11, KM98, Mon93, NS22, Rec11, Rec15, Vla13}).

\subsection{Problem}

Let $T>0$, and consider a bounded Lipschitz domain $\Omega$ in $\R^d$. We assume the existence of two subsets $\Gamma_1$ and $\Gamma_2$ of the boundary $\Gamma\defeq\partial\Omega$, with $|\Gamma_1|>0$ and $\Gamma_2 = \Gamma\setminus\Gamma_1$. Here, $|\Gamma_1|$ denotes the ($d-1$)-dimensional Hausdorff measure of $\Gamma_1$. In this paper, we focus on a problem incorporating Kelvin--Voigt viscosity, aiming to find the displacement $u:[0,T]\times\Omega\rightarrow\R^d$ and stress $\sigma_{\rm st}:[0,T]\times\Omega\rightarrow\Sym_d$ that satisfy 
\begin{align}\label{strong:original}\left\{\begin{aligned}
    \rho\frac{\partial^2 u}{\partial t^2} &= \dvg\sigma_{\rm st} + f &&\mbox{in }(0,T)\times\Omega,\\
    \sigma_{\rm st} &= \nu\E\left(\prz{u}{t}\right) + \sigma &&\mbox{in }(0,T)\times\Omega,\\
    \sigma &= C (\E(u)-\eps_p) &&\mbox{in }(0,T)\times\Omega,\\
    \prz{\eps_p}{t} &\in \partial I_K(\sigma) &&\mbox{in }(0,T)\times\Omega,\\
    u &=u_{\rm b} &&\mbox{on }(0,T)\times\Gamma_1,\\
    \sigma_{\rm st} n &=t_{\rm b} &&\mbox{on }(0,T)\times\Gamma_2,\\
    u(0,\cdot) &= u_0 &&\mbox{in }\Omega,\\
    \prz{u}{t}(0,\cdot) &= v_0 &&\mbox{in }\Omega,\\
    \sigma(0,\cdot) &= \sigma_0 &&\mbox{in }\Omega,\\
\end{aligned}\right.\end{align}
where
    $\rho>0$ is the density,
    $\nu>0$ is the viscosity coefficient,
    $\eps_p:(0,T)\times\Omega\rightarrow\Sym_d$ is the plastic part of $\E(u)$,
    $f:(0,T)\times\Omega\rightarrow\R^d$ is the external force,
    $v_{\rm b}:(0,T)\times\Gamma_1\rightarrow\R^d$ and 
    $\sigma_{\rm b}:(0,T)\times\Gamma_2\rightarrow\R^d$ are the boundary values, 
    $(u_0, v_0, \sigma_0)\in\R^d\times\R^d\times\Sym_d$ are the initial values with $\sigma_0 \in K(0)$,
and are given. Additionally, $n$ represents the outward unit normal vector to the boundary $\Gamma$.

The first equation in \eqref{strong:original} represents the motion equation, while the second, third, and fourth equations correspond to a rheological model depicted in Figure \ref{Fig:model}, which addresses small deformations. Equations five and six define boundary conditions on $\Gamma_1$ and $\Gamma_2$, respectively, while the seventh, eighth, and ninth equations set the initial conditions. The model depicted in Figure \ref{Fig:model} is used in engineering fields. It finds applications in areas such as construction materials \cite{SM16}, concrete flow analysis \cite{RGDTS07}, and concrete slump testing \cite{Roussel04}.

\begin{figure}[h]\centering
  \includegraphics[width=5cm]{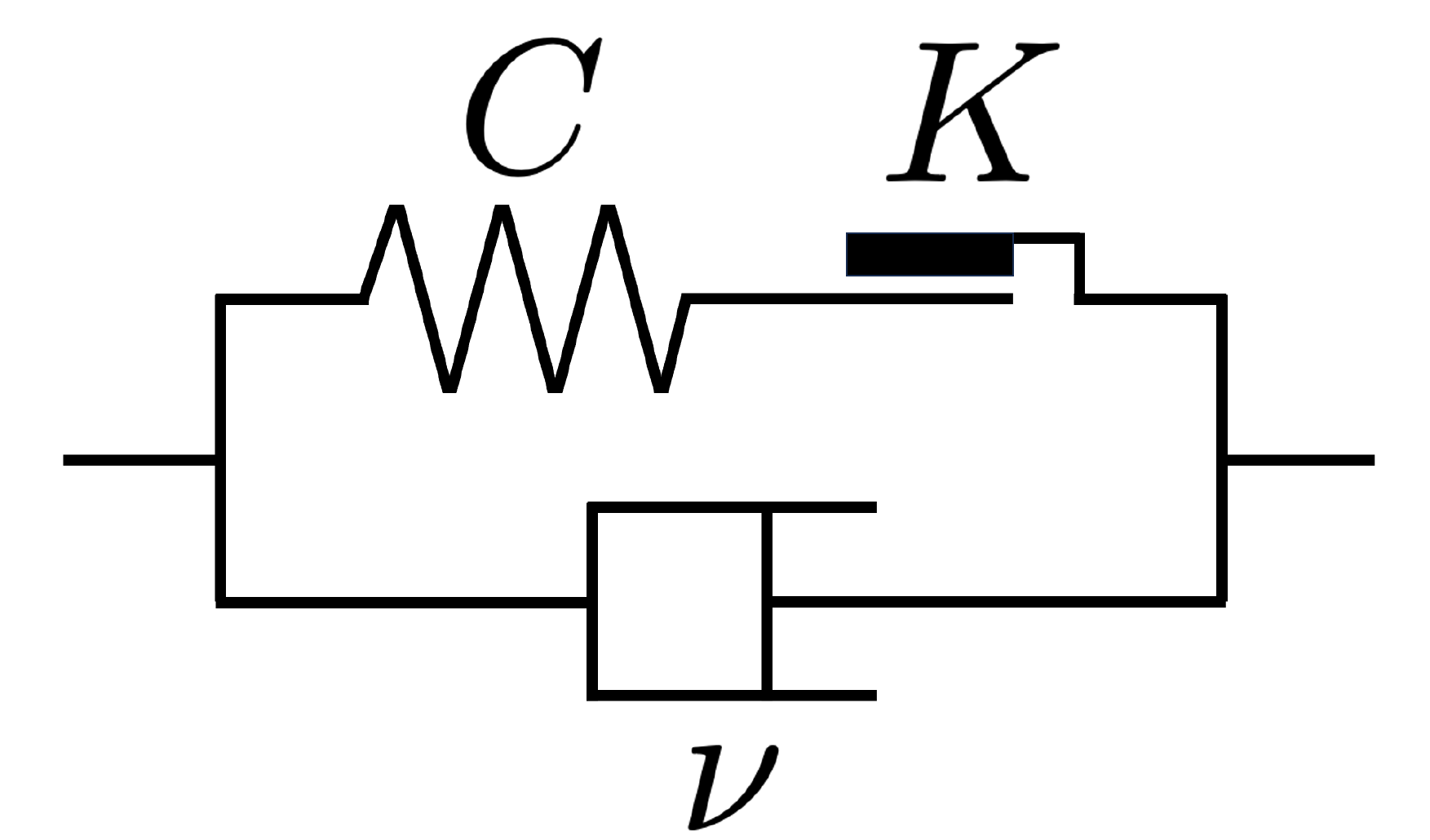}
  \caption{A schematic of the rheological model, showing Kelvin-Voigt and perfectly plastic elements arranged in parallel, used to derive the second, third, and fourth equations of \eqref{strong:original}.}\label{Fig:model}
\end{figure}

In subsequent discussions, we simplify by assuming $\rho=1$ and $C_{ijkl}=(\delta_{ik}\delta_{jl}+\delta_{il}\delta_{jk})/2$, where $\delta_{ij}$ is the Kronecker delta. 
By defining $v\defeq\partial u/\partial t$, obtaining $v$ also yields $u(t,x) = u_0(x) + \int_0^t v(s,x) ds$. Thus, solving the following problem suffices:

\begin{Prob}\label{Prob}
    Find $v:[0,T]\times\Omega\rightarrow\R^d$ and $\sigma:[0,T]\times\Omega\rightarrow\Sym_d$ that satisfy:
    \begin{align}\label{strong}\left\{\begin{aligned}
        \prz{v}{t} &= \nu\dvg\E(v) + \dvg\sigma + f &&\mbox{in }(0,T)\times\Omega,\\
        \prz{\sigma}{t} &\in \E(v) + h - \partial I_K(\sigma) &&\mbox{in }(0,T)\times\Omega,\\
        v &=0 &&\mbox{on }(0,T)\times\Gamma_1,\\
        (\nu\E(v) + \sigma) n &=0 &&\mbox{on }(0,T)\times\Gamma_2,\\
        v(0,\cdot) &= v_0 &&\mbox{in }\Omega,\\
        \sigma(0,\cdot) &= \sigma_0 &&\mbox{in }\Omega,\\
    \end{aligned}\right.\end{align}
    where
        $h:(0,T)\times\Omega\rightarrow\Sym_d$ is a function arising from converting non-homogeneous boundary conditions to homogeneous ones.
\end{Prob}

The problem with time-dependent threshold functions was proposed in \cite{AFK23}. 
For cases without time-dependent threshold functions, similar challenges incorporating heat transfer are addressed in \cite{KS98} (cf. \cite{BO18, BR13, CR19, Rossi18, BM17, Roubicek13}). In general, for solving elastoplastic problems numerically, it is necessary to employ nonlinear problem-solving algorithms, such as the Newton--Raphson method \cite{dSNPO08}. The scheme proposed in \cite{BR13} also employs the semismooth Newton method.

In this paper, we propose a new numerical scheme for Problem \ref{Prob}. The proposed scheme is stable under suitable norms regardless of the time step size and allows us to solve without the use of nonlinear problem-solving algorithms. The solutions of the proposed scheme satisfy the yield criterion for each time step. Furthermore, using this stability, we can show the existence of an exact solution (in the sense of Definition \ref{Def:Prob}). While spatial continuity and a positive lower bound of $g$ are assumed due to technical reasons for obtaining a well-posedness of Problem \ref{Prob} in \cite{AFK23}, we obtain the existence and uniqueness of the solution of Problem \ref{Prob} without these assumptions. Additionally, we also show that the solutions of the proposed scheme strongly converge to the exact solutions under suitable norms.

This paper is organized as follows. Section \ref{sec:Notation} defines the notation and the solutions to Problem \ref{Prob}. In Section \ref{sec:Numer}, we discretize Problem \ref{Prob} in the time direction and discuss the challenges of naively solving it either explicitly or implicitly, before introducing our proposed scheme. The main results are compiled in Section \ref{sec:Main}. Proofs of the main results are detailed in Section \ref{sec:Proofs}. Finally, Section \ref{sec:Conclusion} contains our conclusions.

\section{Notations and definition of a solution of Problem \ref{Prob}}\label{sec:Notation}

\subsection{Notations}

For a Banach space $X$, the dual pairing between $X$ and the dual space $X^*$ is denoted by $\langle\cdot,\cdot\rangle_{\!X^*,X}$, and we simply write $L^2(X)$, $L^\infty(X)$, and $H^1(X)$ as $L^2(0,T;X)$, $L^\infty(0,T;X)$, and $H^1(0,T;X)$, respectively.
We say that a function $u:[0, T]\rightarrow X$ is weakly continuous if, for all $f \in X^*$, the function defined by $[0,T]\ni t\mapsto\langle f,u(t)\rangle_{\!X^*,X}\in\R$ is continuous. We denote by $C^0([0,T];X_w)$ the set of functions defined on $[0,T]$ with values in $X$ which are weakly continuous.
For two sequences $(x_k)_{k=0}^N$ and $(y_k)_{k=1}^N$ in $X$, we define a piecewise linear interpolant $\hat{x}_\dt \in W^{1, \infty}(0, T; X)$ of $(x_k)_{k=0}^N$ and a piecewise constant interpolant $\bar{y}_\dt \in L^\infty(0, T; X)$ of $(y_k)_{k=1}^N$, respectively, by
\[\begin{aligned}
    \hat{x}_\dt(t)&:=x_{k-1}+\frac{t-t_{k-1}}{\dt}(x_k-x_{k-1})
    &&\text{for }t\in[t_{k-1},t_k] \mbox{ and }k=1,2,\ldots,N,\\
    \bar{y}_\dt(t)&:=y_k
    &&\text{for }t\in(t_{k-1},t_k] \mbox{ and }k=1,2,\ldots,N.
\end{aligned}\]
We define a backward difference operator by 
\[
    D_\dt x_k:=\frac{x_k-x_{k-1}}{\dt},\qquad
    D_\dt y_l:=\frac{y_l-y_{l-1}}{\dt}
\]
for $k=1,2,\ldots,N$ and $l=2,3,\ldots,N$.
Then, the sequence $(D_\dt x)_k:=D_\dt x_k$ satisfies 
$\frac{\partial\hat{x}_\dt}{\partial t}=(\overline{D_\dt x})_\dt$
on $(t_{k-1},t_k)$ for all $k=1,2,\ldots,N$.

\subsection{Definition of a solution of Problem \ref{Prob}}

We define the function spaces
$\Hv \defeq L^2(\Omega;\R^d)$,
$\Hs \defeq L^2(\Omega;\Sym_d)$,
$\Vv \defeq \{\varphi\in H^1(\Omega;\R^d):\varphi=0\mbox{ on }\Gamma_1\}$,
and let $\Vv^*$ be the dual space of $\Vv$. The solution to Problem \ref{Prob} is defined by using the following variational inequality:

\begin{Def}\label{Def:Prob}
    Given $\nu>0$, $v_0\in\Hv$, $\sigma_0\in\Hs$, $f\in L^2(0,T;\Vv^*)$, $h\in L^2(0,T;\Hs)$, $p\in H^1(0,T;\Hs)$, $g\in H^1(0,T;\Lo)$, and assume that for almost every $t\in[0,T]$ and almost every $x\in\Omega$, $g(t,x)\ge0$.
    We call the pair $(v, \sigma)\in (H^1(0,T;\Vv^*)\cap L^2(0,T;\Vv)) \times H^1(0,T;\Hs)$ a solution to Problem \ref{Prob} if:
    $v(0)=v_0$, $\sigma(0)=\sigma_0$ and for all $t\in[0,T]$,
    \[
        \sigma(t) \in K(t)
    \]
    is satisfied, and for almost every $t\in(0,T)$ and all $\varphi\in\Vv$ and $\tau\in K(t)$
    \begin{align}\label{P}\left\{\begin{aligned}
        &\blaket{\prd{v}{t}(t)}{\varphi}
        + \nu (\E(v(t)),\E(\varphi))_\Hs + (\sigma(t),\E(\varphi))_\Hs
        = \blaket{f(t)}{\varphi},\\
        &\left(\prd{\sigma}{t}(t)-\E(v(t)),\sigma(t)-\tau\right)_\Hs
        \le \left(h(t),\sigma(t)-\tau\right)_\Hs
    \end{aligned}\right.\end{align}
    holds. Here, $K(t)$ is a time-dependent function space defined as:
    \begin{align*}
        K(t) \defeq \tilde{K}(t)-p(t),\quad
        \tilde{K}(t) \defeq \left\{\tau\in\Hs : |\tau^D|\le g(t)
        \mbox{ a.e. in }\Omega\right\}.
    \end{align*}
\end{Def}

The discussions in \cite{AFK23} focus on solvability and parameter dependency in Problem \ref{Prob}, with the following proven results:

\begin{Th}[\cite{AFK23}]\label{Th:AFK}
    Under the following three conditions, there exists a unique solution to Problem \ref{Prob}.
    \begin{enumerate}
        \item[(A1)] $f \in L^2(0,T;\Hv)$, 
        \item[(A2)] $g \in H^1(0,T;C(\ov{\Omega}))$,
        \item[(A3)] there exists a constant $C>0$ such that
        \[
        0 < C \le g(t,x)\quad\mbox{for all }(t,x)\in[0,T]\times\ov{\Omega}.
        \]
    \end{enumerate}
\end{Th}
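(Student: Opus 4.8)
The plan is to establish Theorem~\ref{Th:AFK} by a semi-implicit time discretization of Problem~\ref{Prob}, uniform a priori estimates, compactness/passage to the limit, and an energy estimate for uniqueness. Fix $N\in\N$, set $\dt=T/N$, $t_k=k\dt$, and let $f_k,h_k,p_k,g_k$ denote piecewise-constant-in-time approximations of the data on $(t_{k-1},t_k]$. Starting from the given $v_0,\sigma_0$ (with $\sigma_0\in K(0)$), I would solve inductively for $(v_k,\sigma_k)\in\Vv\times\Hs$ the coupled problem
\begin{align*}
  &\inner{D_\dt v_k}{\varphi}{\Hv}+\nu\inner{\E(v_k)}{\E(\varphi)}{\Hs}+\inner{\sigma_k}{\E(\varphi)}{\Hs}=\blaket{f_k}{\varphi}\quad\text{for all }\varphi\in\Vv,\\
  &\inner{D_\dt\sigma_k-\E(v_k)-h_k}{\sigma_k-\tau}{\Hs}\le0\quad\text{for all }\tau\in K(t_k).
\end{align*}
Solvability of this system for every $\dt>0$, with no smallness condition, rests on the observation that the linear coupling through the terms $\inner{\sigma_k}{\E(\varphi)}{\Hs}$ and $-\inner{\E(v_k)}{\sigma_k-\tau}{\Hs}$ is skew-symmetric, hence monotone; adding the coercive operator $\varphi\mapsto-\nu\dvg\E(\varphi)$ (coercive on $\Vv$ by Korn's inequality, which uses $|\Gamma_1|>0$), the $\dt^{-1}$ identity terms, and the maximal monotone operator $\partial I_{K(t_k)}$, one obtains a coercive maximal monotone operator on the Hilbert space $\Vv\times\Hs$, so existence and uniqueness of $(v_k,\sigma_k)$ follow from the Browder--Minty/Br\'ezis theory. (Alternatively, one could recast Problem~\ref{Prob} in the abstract framework of evolution inclusions governed by time-dependent subdifferentials and invoke Kenmochi-type results, handling the non-gradient coupling separately.)

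For the a priori estimates, test the first identity with $v_k$ and the second inequality with $\sigma_k-\zeta_k$, where $\zeta_k\defeq-p(t_k)\in K(t_k)$ (admissible since $0\in\tilde K(t_k)$ because $g(t_k)\ge0$); summing over $k$ and applying the discrete Gronwall inequality yields bounds for $(v_k)$ in $\ell^\infty(\Hv)\cap\ell^2(\Vv)$ and $(\sigma_k)$ in $\ell^\infty(\Hs)$, uniform in $\dt$. The delicate point is the uniform bound on $(D_\dt\sigma_k)$ in $\ell^2(\Hs)$: using the second inequality at steps $k$ and $k-1$ one must transport the admissible element $\sigma_{k-1}\in K(t_{k-1})$ into $K(t_k)$ with controlled displacement. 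For the von Mises set this is done by rescaling and shifting only the deviatoric part: given $\tau\in K(s)$, set
\[
  \hat\tau\defeq\Bigl(\tfrac{g(t)}{g(s)}\bigl(\tau^D+p(s)^D\bigr)-p(t)^D\Bigr)+\tfrac{\tr\tau}{d}E_d\in K(t),
\]
which satisfies $\|\hat\tau-\tau\|_\Hs\lesssim\|g(t)-g(s)\|_\Lo+\|p(t)-p(s)\|_\Hs$ because $|\tau^D+p(s)^D|\le g(s)$ a.e.; here the division by $g(s)$ is made unambiguous by the uniform lower bound (A3), and the spatial continuity (A2) keeps the family $K(t)$ uniformly non-degenerate. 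Plugging $s=t_{k-1}$, $t=t_k$, squaring, summing, and closing a discrete Gronwall argument (using $p,g\in H^1(0,T;\cdot)$) bounds $\sum_k\dt\,\|D_\dt\sigma_k\|_\Hs^2$, i.e.\ $(\hat\sigma_\dt)$ in $H^1(0,T;\Hs)$; hypothesis (A1) moreover gives $v\in H^1(0,T;\Hv)$, which feeds into this estimate, and the first identity then bounds $\partial_t\hat v_\dt$ in $L^2(0,T;\Vv^*)$.

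Next I would pass to the limit. Along a subsequence, $\hat v_\dt\rightharpoonup v$ in $L^2(0,T;\Vv)\cap H^1(0,T;\Vv^*)$, $\hat\sigma_\dt\rightharpoonup\sigma$ in $H^1(0,T;\Hs)$, and by Aubin--Lions $\hat v_\dt\to v$ strongly in $L^2(0,T;\Hv)$, with $\bar v_\dt,\bar\sigma_\dt$ sharing the same limits. Weak convergence disposes of the linear terms of the first equation. In the variational inequality, a test element $\tau\in K(t)$ is approximated by its transported version $\tau_k\in K(t_k)$ as above, and a Minty-type argument — using $\int_0^T\inner{\partial_t\hat\sigma_\dt}{\hat\sigma_\dt}{\Hs}\,dt=\tfrac12(\|\hat\sigma_\dt(T)\|_\Hs^2-\|\sigma_0\|_\Hs^2)$ together with weak lower semicontinuity — passes the second inequality to the limit. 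The constraint $\sigma(t)\in K(t)$ for all $t$ follows from $\sigma\in H^1(0,T;\Hs)\hookrightarrow C([0,T];\Hs)$, from the inclusions $\sigma_k\in K(t_k)$, and from the continuity of $t\mapsto K(t)$; the initial conditions $v(0)=v_0$, $\sigma(0)=\sigma_0$ are inherited from the interpolants through the $C([0,T];\Vv^*)$ and $C([0,T];\Hs)$ embeddings, giving a solution in the sense of Definition~\ref{Def:Prob}.

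Uniqueness (and continuous dependence) is the easy part and uses none of (A1)--(A3): for two solutions set $V=v_1-v_2$, $S=\sigma_1-\sigma_2$, test the difference of the first equations with $V$, and add the second inequality for $\sigma_1$ with $\tau=\sigma_2\in K(t)$ to that for $\sigma_2$ with $\tau=\sigma_1\in K(t)$; the skew-symmetric coupling terms $\pm\inner{S}{\E(V)}{\Hs}$ cancel and the $h$-terms cancel, leaving $\tfrac12\tfrac{d}{dt}\bigl(\|V\|_\Hv^2+\|S\|_\Hs^2\bigr)+\nu\|\E(V)\|_\Hs^2\le0$, whence $V\equiv0$, $S\equiv0$ by the zero initial data. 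The \emph{main obstacle} is the derivative estimate of the second paragraph: constructing the quantitative transport of admissible elements between the moving constraint sets $K(t)$ and checking that the resulting discrete Gronwall argument closes — this is precisely where (A2) and (A3) are genuinely used — together with the careful Minty argument needed to pass the time-dependent variational inequality to the limit.
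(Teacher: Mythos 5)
Your proposal is essentially sound, but note first that the paper does not prove Theorem \ref{Th:AFK} itself --- it is quoted from \cite{AFK23}; what the paper proves is the stronger Theorem \ref{Th:exact}, which drops (A1)--(A3), by means of the decoupled projection scheme \eqref{proj}. Your route is genuinely different from that machinery: you discretize with the fully implicit variational inequality \eqref{implicit}, get step-wise solvability from the skew-symmetric coupling plus coercivity (Korn, Lions--Stampacchia / maximal monotone theory), and --- the key step --- bound $\partial_t\hat\sigma_\dt$ in $L^2(\Hs)$ by transporting $\sigma_{k-1}\in K(t_{k-1})$ into $K(t_k)$ via your rescaled element $\hat\tau$, whose displacement is controlled by $\|g_k-g_{k-1}\|_\Lo+\|p_k-p_{k-1}\|_\Hs$; this is where you invoke (A2)--(A3), consistently with the hypotheses of the cited theorem. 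The paper obtains the same displacement bound from Proposition \ref{Prop:Phi} (ii) and (iv), $|\Pop{g_n}(A)-\Pop{g_{n-1}}(A)|\le|g_n-g_{n-1}|$, applied to the explicit projection step, which is precisely why it can dispense with the positive lower bound and spatial continuity of $g$ (and with (A1), which your argument also does not really need for the stated regularity). Two smaller remarks: your transport estimate in fact never uses (A3) quantitatively, since $|\tau^D+p(s)^D|\le g(s)$ makes the quotient harmless even where $g(s)$ vanishes, so your argument is closer to the assumption-free one than you suggest; and in the limit passage the product $\int(\E(\bar v_\dt),\bar\sigma_\dt)\,dt$ of two merely weakly convergent sequences cannot be handled by the energy identity for $\hat\sigma_\dt$ and weak lower semicontinuity alone --- one must rewrite it through the first discrete equation tested with $\bar v_\dt$ (the same skew cancellation), exactly as in the paper's proof of Theorem \ref{Th:exact}; your sketch names the right tools (Aubin--Lions, Minty) but should make that step explicit. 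Your uniqueness argument coincides with the paper's.
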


\section{Numerical scheme}\label{sec:Numer}

\subsection{Time discretization}

We consider numerical methods for solving Problem \ref{Prob}. Let $\dt=T/N$ ($N\in\N$) and let 
\[
    f_n \defeq \frac{1}{\dt}\int_{t_{n-1}}^{t_n} fdt, ~
    h_n \defeq \frac{1}{\dt}\int_{t_{n-1}}^{t_n} hdt, ~
    p_n \defeq p(t_n), ~
    g_n \defeq g(t_n), ~
    K_n \defeq K(t_n)
\]
for all $n=1,2,\ldots,N$, where $t_n=n\dt$.
Formally discretizing Problem \ref{Prob} implicitly yields the following.
\begin{align}\label{implicit}\left\{\begin{aligned}
    &\inner{\frac{v_n-v_{n-1}}{\dt}}{\varphi}{\Hv}
    + \nu (\E(v_n),\E(\varphi))_\Hs + (\sigma_n,\E(\varphi))_\Hs
    = \blaket{f_n}{\varphi},\\
    &\sigma_n + \tsigma{n}
    = \Pop{g_n}(\sigma_{n-1}+\dt(\E(v_n)+h_n)+\tsigma{n})
    \quad\mbox{in }\Hs,\\
\end{aligned}\right.\end{align}
for all $\varphi\in\Vv$, where $\Pop{R}:\Sym_d\rightarrow\Sym_d$ is defined for $A\in\Sym_d$, 
\begin{align*}
    \Pop{R}(A) \defeq \left\{\begin{aligned}
        &\frac{\tr A}{d}E_d + R\Phi\left(\frac{A^D}{R}\right) && \mbox{if }R>0,\\
        &\frac{\tr A}{d}E_d &&\mbox{if }R=0.
    \end{aligned}\right.\quad 
    \Phi(A) \defeq \left\{\begin{aligned}
        &A && \mbox{if }|A|\le 1,\\
        &\frac{A}{|A|} &&\mbox{if }|A|>1.
    \end{aligned}\right.
\end{align*}
Here, we remark that the second equation of \eqref{implicit} is equivalent to 
\begin{align}\label{ineq:2ndP}
    \left(\frac{\sigma_n - \sigma_{n-1}}{\dt}-\E(v_n),\sigma_n-\tau\right)_\Hs
    \le \left(h_n,\sigma_n-\tau\right)_\Hs
    \qquad\mbox{for all }\tau\in K_n.
\end{align}
See Theorem \ref{th:min} for the equivalence of \eqref{ineq:2ndP} and the second equation of \eqref{implicit}.

To solve \eqref{implicit}, it is necessary to use nonlinear problem-solving algorithms, such as the Newton--Raphson method or the semismooth Newton method, at each step to solve the nonlinear problem \cite{dSNPO08, BR13}. 
While it is possible to treat the third term of \eqref{implicit} as $(\sigma_{n-1},\E(\varphi))_\Hs$ explicitly, this leads to conditional stability and necessitates taking sufficiently small time steps.

\subsection{Proposed scheme}
 
We consider the following numerical scheme: Find $v_n\in\Vv$ and $\sigmas{n}, \sigma_n\in\Hs$ such that for all $\varphi\in\Vv$
\begin{align}\label{proj}\left\{\begin{aligned}
    &\inner{\frac{v_n-v_{n-1}}{\dt}}{\varphi}{\Hv}
    + \nu (\E(v_n),\E(\varphi))_\Hs + (\sigmas{n},\E(\varphi))_\Hs
    = \blaket{f_n}{\varphi} ,\\
    &\frac{\sigmas{n}-\sigma_{n-1}}{\dt} 
    = \E(v_n) + h_n
    \quad\mbox{in }\Hs,\\
    &\sigma_n + \tsigma{n}
    = \Pop{g_n}(\sigmas{n}+\tsigma{n})
    \quad\mbox{in }\Hs.\\
\end{aligned}\right.\end{align}

The first and second equations correspond to the discretization of the first and second equations of \eqref{strong}, with the $-\partial I_K(\sigma)$ term removed. Since $\sigmas{n} \notin K_n$ in general, the third equation involves projecting $\sigmas{n}$ onto the closed convex set $K_n$ to obtain $\sigma_n \in K_n$. This strategy is similar to the projection method used in numerical methods for incompressible viscous flow. In the projection method \cite{Chorin67,Temam69}, the velocity is solved without imposing the incompressibility condition, and then projected onto a divergence-free space to meet the incompressibility condition. As a projection method-like approach for hypo-elastoplastic problems, there is \cite{RSB15}, but this also involves projecting onto a linear space, similar to the projection method.

\section{Main results}\label{sec:Main}

\begin{Th}\label{Th:stab}
    (i) There exists a constant $c_1 > 0$ independent of $\dt$ such that for all $\dt\le1$,
    \[\begin{aligned}
        &\left\|\prd{\hat{v}_\dt}{t}\right\|_{L^2(\Vv^*)}^2
        + \|\bar{v}_\dt\|_{L^\infty(\Hv)}^2
        + \|\bar{v}_\dt\|_{L^2(\Vv)}^2
        + \frac{1}{\dt}\|\hat{v}_\dt - \bar{v}_\dt\|_{L^2(\Hv)}^2\\
        &+ \|\bar{\sigma}^*_\dt\|_{L^\infty(\Hs)}^2
        + \|\bar{\sigma}_\dt\|_{L^\infty(\Hs)}^2
        + \frac{1}{\dt}\|\bar{\sigma}_\dt - \bar{\sigma}_\dt^*\|_{L^2(\Hs)}^2\\
        \le~& c_1\left(\|v_0\|_\Hv^2 + \|\sigma_0\|_\Hs^2
        + \|f\|_{L^2(\Vv^*)}^2 
        + \|p\|_{H^1(\Hs)}^2
        + \|h\|_{L^2(\Hs)}^2\right).
    \end{aligned}\]
    In particular, $(\|\hat{v}_\dt\|_{H^1(\Vv^*)})_{0<\dt<1}$,
    $(\|\bar{v}_\dt\|_{L^\infty(\Hv)})_{0<\dt<1}$, 
    $(\|\bar{v}_\dt\|_{L^2(\Vv)})_{0<\dt<1}$, \\
    $(\|\bar{\sigma}^*_\dt\|_{L^\infty(\Hs)})_{0<\dt<1}$, and 
    $(\|\bar{\sigma}_\dt\|_{L^\infty(\Hs)})_{0<\dt<1}$ are bounded and 
    $\|\hat{v}_\dt - \bar{v}_\dt\|_{L^2(\Hv)}$ and $\|\bar{\sigma}_\dt - \bar{\sigma}_\dt^*\|_{L^2(\Hs)}$ converge to $0$ as $\dt\rightarrow0$.

    \noindent (ii) There exists a constant $c_2 > 0$ independent of $\dt$ such that for all $\dt\le1$,
    \[\begin{aligned}
        \|\hat{\sigma}_\dt\|_{H^1(\Hs)}
        \le& c_2(\|v_0\|_\Hv + \|\sigma_0\|_\Hs + \|f\|_{L^2(\Vv^*)} 
        + \|p\|_{H^1(\Hs)} + \|h\|_{L^2(\Hs)} + \|g\|_{H^1(\Lo)}).
    \end{aligned}\]
    In particular, $(\hat{\sigma}_\dt)_{0<\dt<1}$ is bounded in $H^1(\Hs)$.
\end{Th}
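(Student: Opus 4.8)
The plan is to obtain (i) by a discrete energy identity closed with a discrete Gronwall inequality, and then to deduce (ii) from the Lipschitz‐in‐time dependence of the constraint $t\mapsto K(t)$ together with the nonexpansiveness of the metric projection. First, note that \eqref{proj} is uniquely solvable at each step: eliminating $\sigmas{n}$ through the second equation turns the first one into a coercive elliptic problem for $v_n$ (Lax--Milgram, using Korn's inequality on $\Vv$), after which $\sigmas{n}$ and $\sigma_n$ are determined, so the interpolants are well defined. Throughout, $C$ denotes a constant independent of $\dt$ that may change line to line, and I write $\xi_n\defeq\sigma_n+p_n$, $\xi_n^*\defeq\sigmas{n}+p_n$. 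The third equation of \eqref{proj} then says that $\xi_n$ is the metric projection in $\Hs$ of $\xi_n^*$ onto the closed convex set $\tilde K(t_n)=\{\tau\in\Hs:|\tau^D|\le g_n\text{ a.e. in }\Omega\}$ (equivalently $\sigma_n$ is the projection of $\sigmas{n}$ onto $K_n$; this is the equivalence recorded around \eqref{ineq:2ndP}, cf. Theorem~\ref{th:min}). Two consequences are used: since $g_n\ge0$ a.e., $0\in\tilde K(t_n)$; and since $\sigma_0\in K(0)$, induction gives $\sigma_n\in K_n$ for all $n$.

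For (i), I would test the first equation of \eqref{proj} with $\varphi=v_n$, multiply by $\dt$, and use $2(a-b,a)=|a|^2-|b|^2+|a-b|^2$ to turn the difference quotient into $\tfrac12(\|v_n\|_\Hv^2-\|v_{n-1}\|_\Hv^2+\|v_n-v_{n-1}\|_\Hv^2)$. Eliminating $\E(v_n)$ in the remaining coupling term through the second equation of \eqref{proj}, $\dt\,\E(v_n)=\sigmas{n}-\sigma_{n-1}-\dt\,h_n$, and passing to the shifted variables, the coupling term becomes $\inner{\xi_n^*}{\xi_n^*-\xi_{n-1}}{\Hs}$ minus cross terms that involve only $p_n-p_{n-1}$, $p_n$ and $h_n$. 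For the leading part I would use $\inner{\xi_n^*}{\xi_n^*-\xi_{n-1}}{\Hs}=\tfrac12(\|\xi_n^*\|_\Hs^2-\|\xi_{n-1}\|_\Hs^2+\|\xi_n^*-\xi_{n-1}\|_\Hs^2)$ together with the Pythagorean inequality $\|\xi_n^*\|_\Hs^2\ge\|\xi_n\|_\Hs^2+\|\xi_n^*-\xi_n\|_\Hs^2$ (valid because $0\in\tilde K(t_n)$), noting that $\xi_n^*-\xi_n=\sigmas{n}-\sigma_n$. Summing over $n=1,\dots,m$, the $\|\xi_n\|_\Hs^2$ terms telescope, so the left‐hand side controls $\|v_m\|_\Hv^2$, $\|\xi_m\|_\Hs^2$, $\sum\|v_n-v_{n-1}\|_\Hv^2$, $\dt\sum\|\E(v_n)\|_\Hs^2$, $\sum\|\sigmas{n}-\sigma_n\|_\Hs^2$ and $\sum\|\xi_n^*-\xi_{n-1}\|_\Hs^2$, while the right‐hand side is $\|v_0\|_\Hv^2+\|\xi_0\|_\Hs^2+\sum\dt\blaket{f_n}{v_n}+(\text{cross terms})$. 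The term $\sum\dt\blaket{f_n}{v_n}$ is absorbed by Young's and Korn's inequalities (Korn on $\Vv$ uses $|\Gamma_1|>0$), leaving $\dt\sum\|f_n\|_{\Vv^*}^2\le\|f\|_{L^2(\Vv^*)}^2$; the cross terms are handled by Young's and Cauchy--Schwarz inequalities, the embedding $H^1(0,T;\Hs)\hookrightarrow C([0,T];\Hs)$ (to bound each $\|p_n\|_\Hs$), the estimates $\sum\|p_n-p_{n-1}\|_\Hs^2\le\dt\|p'\|_{L^2(\Hs)}^2$, $\dt\sum\|h_n\|_\Hs^2\le\|h\|_{L^2(\Hs)}^2$, and — crucially — the identity $\xi_n^*-\xi_{n-1}=\dt(\E(v_n)+h_n)+(p_n-p_{n-1})$, which supplies $\dt^{1/2}$‐smallness and prevents any $\dt^{-1}$ blow‐up; after these estimates only a Gronwall term $C\dt\sum_{n=0}^{m-1}\|\xi_n\|_\Hs^2$ survives on the right. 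The discrete Gronwall inequality then bounds $\max_n\|v_n\|_\Hv$ and $\max_n\|\xi_n\|_\Hs$ (hence $\max_n\|\sigma_n\|_\Hs$, and, via the second equation of \eqref{proj}, $\max_n\|\sigmas{n}\|_\Hs$) by the data, and substituting back bounds the four dissipation sums; finally $\|\partial_t\hat{v}_\dt\|_{L^2(\Vv^*)}$ is bounded because the first equation of \eqref{proj} gives $\|\tfrac1{\dt}(v_n-v_{n-1})\|_{\Vv^*}\le\|f_n\|_{\Vv^*}+C(\nu\|\E(v_n)\|_\Hs+\|\sigmas{n}\|_\Hs)$. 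Rewriting everything in terms of the interpolants (e.g. $\tfrac1{\dt}\|\hat{v}_\dt-\bar{v}_\dt\|_{L^2(\Hv)}^2=\tfrac13\sum\|v_n-v_{n-1}\|_\Hv^2$ and $\tfrac1{\dt}\|\bar{\sigma}_\dt-\bar{\sigma}^*_\dt\|_{L^2(\Hs)}^2=\sum\|\sigma_n-\sigmas{n}\|_\Hs^2$) gives (i), the last two quantities tending to $0$ since they equal $\dt$ times a bounded sum.

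For (ii), the only additional quantity is $\|\partial_t\hat{\sigma}_\dt\|_{L^2(\Hs)}^2=\tfrac1{\dt}\sum_{n=1}^N\|\sigma_n-\sigma_{n-1}\|_\Hs^2$. The key is an approximate‐feasibility property of the constraint: if $g(s),g(t)\ge0$ a.e.\ and $\tau\in K(s)$, then, putting $w\defeq(\tau+p(s))^D$ and $w'\defeq\min\{1,g(t)/g(s)\}\,w$ pointwise (with $w'\defeq w$ where $g(s)=0$, where necessarily $w=0$), the element $\hat\tau$ determined by $\hat\tau+p(t)=\tfrac{\tr(\tau+p(s))}{d}E_d+w'$ belongs to $K(t)$, and since $|w'-w|\le|g(t)-g(s)|$ a.e., $\|\hat\tau-\tau\|_\Hs\le\|g(t)-g(s)\|_\Lo+\|p(t)-p(s)\|_\Hs$. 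Applying this with $s=t_{n-1}$, $t=t_n$ and $\tau=\sigma_{n-1}\in K_{n-1}$ produces $\hat\sigma_{n-1}\in K_n$ with $\|\hat\sigma_{n-1}-\sigma_{n-1}\|_\Hs\le\delta_n\defeq\|g_n-g_{n-1}\|_\Lo+\|p_n-p_{n-1}\|_\Hs$. Since the second and third equations of \eqref{proj} together mean that $\sigma_n$ is the metric projection onto $K_n$ of $\sigma_{n-1}+\dt(\E(v_n)+h_n)$, nonexpansiveness of the projection and $\hat\sigma_{n-1}\in K_n$ give
\begin{align*}
  \|\sigma_n-\sigma_{n-1}\|_\Hs
  &\le\|\sigma_n-\hat\sigma_{n-1}\|_\Hs+\delta_n
    \le\|\sigma_{n-1}+\dt(\E(v_n)+h_n)-\hat\sigma_{n-1}\|_\Hs+\delta_n\\
  &\le\dt\|\E(v_n)+h_n\|_\Hs+2\delta_n .
\end{align*}
Squaring, dividing by $\dt$ and summing, $\|\partial_t\hat{\sigma}_\dt\|_{L^2(\Hs)}^2\le C(\dt\sum\|\E(v_n)\|_\Hs^2+\dt\sum\|h_n\|_\Hs^2+\tfrac1{\dt}\sum\delta_n^2)$, where $\dt\sum\|\E(v_n)\|_\Hs^2\le\|\bar{v}_\dt\|_{L^2(\Vv)}^2$ is bounded by (i), $\dt\sum\|h_n\|_\Hs^2\le\|h\|_{L^2(\Hs)}^2$, and $\tfrac1{\dt}\sum\delta_n^2\le C(\|g'\|_{L^2(\Lo)}^2+\|p'\|_{L^2(\Hs)}^2)$ by Cauchy--Schwarz on $g_n-g_{n-1}=\int_{t_{n-1}}^{t_n}g'\,dt$ (and likewise for $p$). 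Together with $\|\hat{\sigma}_\dt\|_{L^2(\Hs)}\le\sqrt T\,\max\{\|\sigma_0\|_\Hs,\|\bar{\sigma}_\dt\|_{L^\infty(\Hs)}\}$, which is bounded by (i), this gives (ii).

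The main obstacle in (i) is arranging the energy estimate so that the projection step enters with the correct sign: the single Pythagorean inequality must simultaneously yield the telescoping stress energy $\tfrac12\|\xi_m\|_\Hs^2$ and the dissipation term $\tfrac1{\dt}\|\bar{\sigma}_\dt-\bar{\sigma}^*_\dt\|_{L^2(\Hs)}^2$, and this is exactly where $g\ge0$ is used; the accompanying delicate point is that the cross terms generated by the non‐autonomy of $p$ must be controlled without a factor $\dt^{-1}$, which forces the use of the structural identity for $\xi_n^*-\xi_{n-1}$ (a naive estimate of $\sum\inner{p_n}{\xi_n^*-\xi_{n-1}}{\Hs}$ fails). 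In (ii), the crux is the approximate‐feasibility lemma, i.e.\ the Lipschitz (Hausdorff) continuity of $t\mapsto K(t)$ with modulus controlled by the $H^1$‐in‐time norms of $g$ and $p$: this is where the explicit von Mises structure of $K$ is used essentially, and it is what lets the analysis dispense with the regularity and positivity assumptions (A2)--(A3) of Theorem~\ref{Th:AFK}.
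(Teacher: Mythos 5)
Your argument is correct and matches the paper's proof in all essentials: part (i) is the paper's own energy estimate in the shifted variables $\sigma_n+p_n$, resting on the projection's Pythagorean property (valid because $g\ge 0$ gives $0\in\tilde K(t_n)$; this is Proposition \ref{Prop:Phi}(iii) with $B=0$), telescoping, the discrete Gronwall inequality, and the dual-norm bound on $\prd{\hat{v}_\dt}{t}$ read off from the first equation of \eqref{proj}, with your structural identity for $\xi_n^*-\xi_{n-1}$ playing the same role as the paper's way of keeping the $p_n$- and $h_n$-terms paired with $\E(v_n)$. For (ii) you construct an approximately feasible comparison element $\hat\sigma_{n-1}\in K_n$ by rescaling the deviatoric part and then use nonexpansiveness of the $\Hs$-projection, whereas the paper bounds $|(\sigma_n+p_n)-(\sigma_{n-1}+p_{n-1})|$ pointwise a.e.\ via Proposition \ref{Prop:Phi}(ii) and (iv); both routes yield the same estimate $\|\sigma_n-\sigma_{n-1}\|_\Hs\le C\bigl(\dt\|\E(v_n)\|_\Hs+\dt\|h_n\|_\Hs+\|p_n-p_{n-1}\|_\Hs+\|g_n-g_{n-1}\|_\Lo\bigr)$, so the difference is purely cosmetic.
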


\begin{Rem}\label{Rem:weakstab}
    If $g$ is not in $H^1(\Lo)$ but in $C([0,T];\Lo)$, we can show that the sequence $(\hat{\sigma}_\dt)_{0<\dt<1}$ is equicontinuous.
\end{Rem}

From the boundedness obtained in Theorem \ref{Th:stab}, we obtain that the sequences \\$(\hat{v}_\dt)_{0<\dt<1}$, $(\bar{v}_\dt)_{0<\dt<1}$, $(\bar{\sigma}^*_\dt)_{0<\dt<1}$, $(\bar{\sigma}_\dt)_{0<\dt<1}$, and $(\hat{\sigma}_\dt)_{0<\dt<1}$ have subsequences that converge weakly. Since the resulting limit $(v,\sigma)$ is a solution to Problem \ref{Prob}, the following theorem can be established without assumptions (A1), (A2), and (A3) of Theorem \ref{Th:AFK}.

\begin{Th}\label{Th:exact}
    For all $\nu>0$, $f\in L^2(\Vv^*)$, $p \in H^1(\Hs)$, $v_0 \in \Hv$, $\sigma_0 \in K(0)$, $g \in H^1(\Lo)$ with $g\ge 0$ a.e on $\Omega$ for all $t\in[0,T]$, there exists a unique solution $(v, \sigma)\in (H^1(\Vv^*)\cap L^2(\Vv)) \times H^1(\Hs)$ to Problem \ref{Prob}.
\end{Th}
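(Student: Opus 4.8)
The plan is to obtain the solution of Problem \ref{Prob} as a limit of the interpolants of the proposed scheme \eqref{proj}, using the a priori bounds of Theorem \ref{Th:stab}. First I would verify that scheme \eqref{proj} is well-posed at each step: given $v_{n-1},\sigma_{n-1}$, eliminating $\sigmas{n}$ via the second equation turns the first equation into a coercive linear variational problem for $v_n\in\Vv$ (coercivity from the $\nu$-viscosity term together with Korn's inequality on $\Vv$, using $|\Gamma_1|>0$), so $v_n$ exists uniquely by Lax--Milgram; then $\sigmas{n}$ is determined, and $\sigma_n$ is obtained by applying the Lipschitz map $\Pop{g_n}$. In particular $\sigma_n+\tsigma{n}=\Pop{g_n}(\sigmas{n}+\tsigma{n})$ lies in $\tilde K_n$, i.e. $\sigma_n\in K_n$, so each iterate satisfies the yield criterion.

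Next I would pass to the limit. By Theorem \ref{Th:stab}(i)--(ii), the families $(\hat v_\dt)$, $(\bar v_\dt)$, $(\bar\sigma^*_\dt)$, $(\bar\sigma_\dt)$, $(\hat\sigma_\dt)$ are bounded in $H^1(\Vv^*)$, $L^\infty(\Hv)\cap L^2(\Vv)$, $L^\infty(\Hs)$, $L^\infty(\Hs)$, and $H^1(\Hs)$ respectively, so along a subsequence $\dt\to0$ we get weak (and weak-$*$) limits; denote the limits of $\hat v_\dt$ and $\bar v_\dt$ by $v$ and the limits of $\hat\sigma_\dt$, $\bar\sigma_\dt$, $\bar\sigma^*_\dt$ by $\sigma$. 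The estimates $\|\hat v_\dt-\bar v_\dt\|_{L^2(\Hv)}\to0$ and $\|\bar\sigma_\dt-\bar\sigma^*_\dt\|_{L^2(\Hs)}\to0$ force the $v$-limits to coincide and the three $\sigma$-limits to coincide; by Aubin--Lions-type compactness ($\hat v_\dt$ bounded in $L^2(\Vv)\cap H^1(\Vv^*)$) one upgrades to strong convergence $\bar v_\dt\to v$ in $L^2(\Hv)$, which handles the nonlinearity. Rewriting \eqref{proj} in interpolant form, the first equation becomes $\partial_t\hat v_\dt + \nu\E(\bar v_\dt) + \bar\sigma^*_\dt$ tested against $\Vv$ equals $\bar f_\dt$ in $\Vv^*$; since $\bar\sigma^*_\dt$ and $\bar\sigma_\dt$ share the limit $\sigma$, letting $\dt\to0$ yields the first line of \eqref{P}. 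For the second inclusion, I would combine the second and third equations of \eqref{proj}: writing $\tau_n\in K_n$, the projection identity gives $(\E(v_n)+h_n-D_\dt\sigma_n,\sigma_n-\tau_n)_\Hs\le 0$ plus an error term coming from $\sigmas{n}\neq\sigma_n$ of order $\|\bar\sigma_\dt-\bar\sigma^*_\dt\|$, which vanishes. After checking $\sigma(t)\in K(t)$ for all $t$ (from $\bar\sigma_\dt(t)\in K_{\lceil t/\dt\rceil}$, the $H^1$-continuity of $p,g$, and weak closedness of the constraint) and $v(0)=v_0$, $\sigma(0)=\sigma_0$, the limit pair solves Problem \ref{Prob}; uniqueness follows from a standard energy/Gronwall estimate on the difference of two solutions, using monotonicity of $\partial I_{K(t)}$ and the $H^1(\Hs)$-regularity of $p$ together with the Lipschitz-in-time dependence of $K(t)$.

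The main obstacle I expect is the limit passage in the second (variational inequality) line: the test element $\tau$ must lie in the time-dependent set $K(t)$, whereas at the discrete level one only has $\tau_n\in K_n$, so one must construct, for a fixed $\tau\in L^2(0,T;\Hs)$ with $\tau(t)\in K(t)$ a.e., a discrete approximation $\tau_n\in K_n$ with $\bar\tau_\dt\to\tau$ strongly in $L^2(\Hs)$; this uses the explicit structure $K(t)=\tilde K(t)-p(t)$ and the time-regularity $p\in H^1(\Hs)$, $g\in H^1(\Lo)$ to ``transport'' $\tau$ between constraint sets (e.g. via $\tau_n=\Pop{g_n}(\tau(t)+p(t))-p_n$ or a truncation argument), and keeping the resulting error uniform is the delicate point. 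The second, milder, difficulty is handling the products $(\bar\sigma^*_\dt,\E(\bar v_\dt))_\Hs$ and $(\bar\sigma_\dt,\sigma_n-\tau_n)_\Hs$ under merely weak convergence; here one exploits that $\E(\bar v_\dt)\rightharpoonup\E(v)$ weakly in $L^2(\Hs)$ against the (now strongly convergent) $\bar\sigma^*_\dt$, and for the quadratic term $(D_\dt\sigma_n,\sigma_n)_\Hs$ one uses a discrete integration-by-parts and weak lower semicontinuity of the norm, exactly as in the proof of Theorem \ref{Th:stab}.
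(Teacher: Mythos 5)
Your overall strategy matches the paper: extract weakly convergent subsequences from Theorem~\ref{Th:stab}, upgrade via Aubin--Lions for $v$, pass to the limit in the discrete equations, and prove uniqueness by a standard energy estimate. The discrete sign condition you quote, $(D_\dt\sigma_n-\E(v_n)-h_n,\sigma_n-\tau_n)_\Hs\le 0$ for $\tau_n\in K_n$, is exactly what the paper establishes via Proposition~\ref{Prop:Phi}(iii) — and, incidentally, it holds \emph{exactly} (not ``up to an error of order $\|\bar\sigma_\dt-\bar\sigma^*_\dt\|$''), since $D_\dt\sigma_n-\E(v_n)-h_n=(\sigma_n-\sigma^*_n)/\dt$ and the projection property gives $(\sigma_n-\sigma^*_n,\sigma_n-\tau_n)_\Hs\le 0$ directly.

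There is, however, a genuine gap in the limit passage for the variational inequality. You write that the mixed term $(\bar\sigma^*_\dt,\E(\bar v_\dt))_\Hs$ can be handled because $\bar\sigma^*_\dt$ is ``now strongly convergent.'' This is not established and cannot be obtained from Theorem~\ref{Th:stab}: you only have $\hat\sigma_\dt$ bounded in $H^1(\Hs)\cap L^\infty(\Hs)$ and $\bar\sigma^*_\dt,\bar\sigma_\dt$ bounded in $L^\infty(\Hs)$, which gives weak-$*$ convergence in $L^\infty(\Hs)$ and nothing stronger, since there is no spatial compactness in $\Hs=L^2(\Omega;\Sym_d)$ (Aubin--Lions does not apply, as the ``spatial'' space is the same as the ``pivot'' space). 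Strong $L^2(\Hs)$-convergence of $\hat\sigma_\dt$ is precisely the content of Theorem~\ref{Th:strong}, which requires extra regularity and is proved \emph{after} the existence theorem. The correct mechanism, and the one the paper uses, is a duality substitution: using the first scheme equation with $\varphi=\bar v_\dt$,
\[
-\int_0^t\!\big(\E(\bar v_{\dt_k}),\bar\sigma^*_{\dt_k}\big)_\Hs\,ds
=\int_0^t\!\Big(\blaket{\tfrac{d\hat v_{\dt_k}}{dt}}{\bar v_{\dt_k}}
+\nu\|\E(\bar v_{\dt_k})\|_\Hs^2-\blaket{\bar f_{\dt_k}}{\bar v_{\dt_k}}\Big)ds,
\]
whose $\liminf$ is controlled by discrete integration by parts, the pointwise strong convergence $\hat v_{\dt_k}(t)\to v(t)$ in $\Hv$, and weak lower semicontinuity, and is then identified via the already-proved first equation of \eqref{P}. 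A parallel lsc argument handles $\int_0^t(\frac{d\hat\sigma_{\dt_k}}{dt},\bar\sigma_{\dt_k})_\Hs\,ds$, as you correctly anticipate. Without this substitution step your argument does not close.

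One further remark: the difficulty you describe about ``transporting'' test functions $\tau$ between $K(t)$ and $K_n$ does not arise at the discrete stage in the paper, because the paper restricts to $\tau\in C([0,T];\Hs)$ with $\tau(t)\in K(t)$ for all $t$ and simply sets $\tau_n:=\tau(t_n)\in K(t_n)=K_n$. Your projection idea $\tau(t)=\Pop{g(t)}(\tau_0+p(t))-p(t)$ is nevertheless exactly what is needed — but at a later step, namely to pass from the resulting integrated inequality with continuous-in-time test functions to the pointwise-a.e.\ inequality in Definition~\ref{Def:Prob} with an arbitrary $\tau\in K(t)$ at a fixed $t$.
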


\begin{Rem}
    If $g$ is not in $H^1(\Lo)$ but in $C([0,T];\Lo)$, by Remark \ref{Rem:weakstab} and the Ascoli--Arzel{\`{a}} theorem, one shows existence (and uniqueness) of a weak solution $(v, \sigma) \in (H^1(\Vv^*)\cap L^2(\Vv)) \times C([0,T];\Hs)$ to Problem \ref{Prob} in the sense of \cite[Definition 2.2]{AFK23} using the same way as the proof of Theorem \ref{Th:exact}.
\end{Rem}

Furthermore, by assuming smoothness, one can show that the solutions to \eqref{proj} strongly converge to the solution to Problem \ref{Prob}. 

\begin{Th}\label{Th:strong}
    Under the assumption of Theorem \eqref{Th:exact}, if $v_0\in\Vv$ and $f\in L^2(\Hs)$, then we have
    \begin{align*}
        \hat{\sigma}_\dt &\rightarrow \sigma\quad\mbox{strongly in }L^\infty(\Hs),\\
        \hat{v}_\dt &\rightarrow v\quad\mbox{strongly in }L^\infty(\Hv),\\
        \bar{v}_\dt &\rightarrow v\quad\mbox{strongly in }L^2(\Vv),
    \end{align*}
    as $\dt\rightarrow 0$.
\end{Th}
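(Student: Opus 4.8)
The plan is to establish strong convergence by combining the weak convergence already obtained from the stability bounds of Theorem \ref{Th:stab} with an energy-type argument that upgrades weak convergence to strong convergence. First I would fix a subsequence along which $\hat v_\dt\rightharpoonup v$ in $H^1(\Vv^*)\cap L^2(\Vv)$, $\bar v_\dt\rightharpoonup v$ in $L^2(\Vv)$, $\hat\sigma_\dt\rightharpoonup\sigma$ in $H^1(\Hs)$, and $\bar\sigma_\dt,\bar\sigma^*_\dt\rightharpoonup\sigma$ in $L^2(\Hs)$, where $(v,\sigma)$ is the unique solution to Problem \ref{Prob} from Theorem \ref{Th:exact}; since the limit is unique, the whole family will converge, not just a subsequence. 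The extra smoothness hypotheses $v_0\in\Vv$ and $f\in L^2(\Hs)$ are there to give an improved estimate on $D_\dt v_n$ (an $L^2(\Hv)$ or $L^\infty(\Hv)$-type bound on the discrete velocity increments, obtained by testing the discrete momentum equation with $D_\dt v_n$ and using $v_0\in\Vv$ to control the initial term $\nu(\E(v_0),\E(v_1))$), which is what makes the strong convergence in $L^\infty(\Hv)$ and $L^\infty(\Hs)$ reachable rather than merely weak-$*$.

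The core of the argument is a discrete energy identity. I would test the first equation of \eqref{proj} with $\varphi=\dt\,v_n$ and pair the second and third equations of \eqref{proj} appropriately with $\sigma_n$ and $\sigmas{n}$, then sum over $n=1,\dots,m$ for arbitrary $m\le N$. Using the algebraic identity $2(a-b,a)=|a|^2-|b|^2+|a-b|^2$ on the $\|v_n\|_\Hv^2$ and $\|\sigma_n\|_\Hs^2$ increments, and using that $\Pop{g_n}$ is a projection onto $K_n$ (so that $(\sigmas{n}-\sigma_n,\sigma_n-\tau)_\Hs\ge0$ for $\tau\in K_n$, and in particular the projection is $1$-Lipschitz and $|\sigma_n|\le$ something controlled), one arrives at a discrete version of the natural energy functional $\frac12\|v(t)\|_\Hv^2+\frac12\|\sigma(t)\|_\Hs^2$ plus the dissipation term $\nu\int_0^t\|\E(v)\|_\Hs^2$. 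The continuous solution $(v,\sigma)$ satisfies the corresponding continuous energy identity, obtained by testing \eqref{P} with $\varphi=v(t)$ and $\tau$ chosen as a suitable comparison element (the standard trick for sweeping-process variational inequalities: use $\tau = $ projection of $\sigma$ onto $K(s)$ from a nearby time, or differentiate $p$ and exploit $\sigma+p\in\tilde K$). Passing to the liminf in the discrete energy identity using weak lower semicontinuity of norms, and comparing with the continuous energy identity, forces the norms to converge: $\limsup_\dt\big(\tfrac12\|\bar\sigma_\dt(t)\|_\Hs^2+\tfrac12\|\bar v_\dt(t)\|_\Hv^2+\nu\|\bar v_\dt\|_{L^2(0,t;\Vv)}^2\big)\le \tfrac12\|\sigma(t)\|_\Hs^2+\tfrac12\|v(t)\|_\Hv^2+\nu\|v\|_{L^2(0,t;\Vv)}^2$, which combined with weak convergence gives norm convergence and hence strong convergence in each space. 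The uniformity in $t$ (to get $L^\infty$ rather than pointwise convergence) comes from the fact that the energy estimate holds for all $m$ simultaneously with a Gronwall-type constant, together with the equicontinuity already implicit in Theorem \ref{Th:stab}(ii).

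The main obstacle I anticipate is handling the cross terms and the time-dependence of the constraint set $K_n$ when matching the discrete and continuous energy balances. In the discrete scheme, $\sigmas{n}$ and $\sigma_n$ differ, and the projection step interacts with the shift $\tsigma{n}=p(t_n)$, so the discrete variational inequality \eqref{ineq:2ndP} must be used carefully with the comparison element $\tau=\sigma_{n-1}+p_n-p_{n-1}$ (or similar), producing error terms of the form $(D_\dt p, \cdot)_\Hs$ that need to be absorbed using $p\in H^1(\Hs)$ and the $L^2(\Vv)$ bound on $\bar v_\dt$; quantifying these remainders so that they vanish as $\dt\to0$ without destroying the $\limsup$ inequality is the delicate point. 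A secondary technical nuisance is that the energy identity naturally controls $\bar v_\dt$ and $\bar\sigma_\dt$, so transferring strong convergence to the interpolants $\hat v_\dt$ and $\hat\sigma_\dt$ requires the estimates $\|\hat v_\dt-\bar v_\dt\|_{L^2(\Hv)}\to0$ and $\|\bar\sigma_\dt-\bar\sigma^*_\dt\|_{L^2(\Hs)}\to0$ from Theorem \ref{Th:stab}(i) together with the improved increment bound, but this is routine once the main energy comparison is in place.
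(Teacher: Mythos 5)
Your proposal correctly identifies the need for an improved a priori bound under the extra hypotheses $v_0\in\Vv$, $f\in L^2(\Hs)$ (the paper isolates this as Lemma~\ref{Lem:stab2}, obtained by testing the discrete momentum equation with $D_\dt v_n$ exactly as you suggest). However, the overall strategy you then propose — norm-convergence via a $\limsup$/$\liminf$ energy comparison — diverges from the paper's proof, and it contains a genuine gap that I do not think can be repaired without in effect switching to the paper's method.

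The difficulty is the dissipation term. For a parabolic \emph{equation} the usual argument is: a discrete energy \emph{identity}, a continuous energy \emph{identity}, weak convergence of the right-hand sides, weak lower semicontinuity of the left-hand sides, hence norm convergence. Here both the discrete and continuous problems are variational \emph{inequalities}, so the energy balances are one-sided and carry nonnegative dissipation terms of unknown size: for the scheme, the projection step gives $\|\sigma_n+p_n\|_\Hs\le\|\sigmas{n}+p_n\|_\Hs$ and hence an uncontrolled extra energy drop $\frac{1}{2}\sum_n\|\sigma_n-\sigmas{n}\|_\Hs^2$; for the exact solution, testing~\eqref{P} only yields $\frac{1}{2}\frac{d}{dt}(\|v\|_\Hv^2+\|\sigma\|_\Hs^2)+\nu\|\E(v)\|_\Hs^2\le\langle f,v\rangle+(h,\sigma)_\Hs+(\text{terms from }p)$, again with unknown slack coming from $\partial I_{K(t)}$. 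Your proposed inequality
$\limsup_\dt\big(\tfrac12\|\bar\sigma_\dt(t)\|_\Hs^2+\tfrac12\|\bar v_\dt(t)\|_\Hv^2+\nu\|\bar v_\dt\|_{L^2(0,t;\Vv)}^2\big)\le\tfrac12\|\sigma(t)\|_\Hs^2+\tfrac12\|v(t)\|_\Hv^2+\nu\|v\|_{L^2(0,t;\Vv)}^2$
would require showing that the discrete dissipation converges to the continuous one; you have no identity forcing this, and in general the two dissipations need not match. Without it, the upper bound you obtain from the discrete balance has the wrong right-hand side to close the pincer with weak lower semicontinuity, so strong convergence does not follow from this route. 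Your remark about uniformity in $t$ is also delicate: pointwise Radon--Riesz gives strong convergence in $\Hs$ at each $t$, but the passage to $L^\infty(\Hs)$ as asserted needs a uniform estimate, not merely equicontinuity plus pointwise convergence of norms.

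The paper avoids this obstruction entirely by a direct error estimate with \emph{cross-testing}: it subtracts the two variational inequalities after choosing test elements from the other problem's admissible set, namely $\tau\defeq\sigma_\dt$ (the piecewise-constant sampling $\sigma(t_n)\in K_n$) in the discrete inequality~\eqref{approaxP}, and $\tau\defeq\Pop{g}(\bar\sigma_\dt+p)-p\in K(t)$ in the continuous inequality~\eqref{P}. Adding the resulting inequalities, the indefinite cross terms $(\E(\bar e_\dt),\bar\eps^*_\dt)_\Hs$ cancel against the momentum-error contribution, the monotonicity of $\Pop{g}$ (Proposition~\ref{Prop:Phi}) makes the leftover projection terms have a definite sign, and one is left with
$\|\hat e_\dt(t)\|_\Hv^2+\nu\|\E(\bar e_\dt)\|_{L^2(0,t;\Hs)}^2+\|\hat\eps_\dt(t)\|_\Hs^2\le(\text{vanishing remainders})$
uniformly in $t$, which directly gives $L^\infty$ and $L^2$ strong convergence. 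The monotonicity (not the energy balance) is what neutralizes the dissipation; that is the missing idea in your proposal.
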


\section{Proofs of main results}\label{sec:Proofs}

\subsection{Preliminary result}

We recall the Korn inequality and the discrete Gronwall inequality.

\begin{Lem}
    {\rm \cite[Lemma 5.4.18]{Trangenstein13}}
    There exists a constant $c_K>0$ such that 
    \[
        \frac{1}{c_K}\|\varphi\|_\Vv \le \|\E(\varphi)\|_\Hs 
        \le c_K\|\varphi\|_\Vv \quad\mbox{for all }\varphi\in \Vv.
    \]
\end{Lem}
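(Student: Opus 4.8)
The upper bound is immediate: the entries of $\E(\varphi)$ are linear combinations of first-order partial derivatives of $\varphi$, so $\|\E(\varphi)\|_\Hs \le \|\grad\varphi\|_\Hs \le \|\varphi\|_\Vv$, and the right-hand inequality holds with any $c_K\ge 1$. The substance is the left-hand (Korn) inequality $\|\varphi\|_\Vv \le c_K\|\E(\varphi)\|_\Hs$. The plan is the classical two-step route: first establish Korn's second inequality
\[
    \|\varphi\|_{H^1(\Omega;\R^d)} \le C\bigl(\|\varphi\|_{L^2(\Omega;\R^d)} + \|\E(\varphi)\|_\Hs\bigr)
    \qquad\text{for all }\varphi\in H^1(\Omega;\R^d),
\]
and then remove the lower-order term $\|\varphi\|_{L^2}$ by a compactness argument that uses the boundary condition defining $\Vv$ together with the hypothesis $|\Gamma_1|>0$.

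For Korn's second inequality I would start from the distributional identity
\[
    \pz_j\pz_k\varphi_i
    = \pz_j(\E(\varphi))_{ik} + \pz_k(\E(\varphi))_{ij} - \pz_i(\E(\varphi))_{jk},
\]
valid for all indices $i,j,k$. It shows that every second derivative of $\varphi$ lies in $H^{-1}(\Omega)$ with $H^{-1}$-norm controlled by $\|\E(\varphi)\|_\Hs$. Applying Ne\v{c}as' lemma (the Lions lemma: on a bounded Lipschitz domain, $w\in H^{-1}(\Omega)$ with $\grad w\in H^{-1}(\Omega;\R^d)$ implies $w\in L^2(\Omega)$ and $\|w\|_{L^2(\Omega)}\le C(\|w\|_{H^{-1}(\Omega)}+\|\grad w\|_{H^{-1}(\Omega;\R^d)})$) to $w=\pz_k\varphi_i$ then yields $\pz_k\varphi_i\in L^2(\Omega)$ with $\|\pz_k\varphi_i\|_{L^2(\Omega)}\le C(\|\varphi\|_{L^2}+\|\E(\varphi)\|_\Hs)$; summing over $i,k$ gives Korn's second inequality.

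The second step is a Peetre--Tartar type contradiction argument. If the left-hand inequality fails, there are $\varphi_m\in\Vv$ with $\|\varphi_m\|_\Vv=1$ and $\|\E(\varphi_m)\|_\Hs\to 0$. By the Rellich--Kondrachov theorem a subsequence converges strongly in $L^2(\Omega;\R^d)$; applying Korn's second inequality to the differences $\varphi_m-\varphi_{m'}$, the subsequence is Cauchy in $H^1(\Omega;\R^d)$, hence converges in $\Vv$ to some $\varphi$ with $\|\varphi\|_\Vv=1$, $\E(\varphi)=0$, and $\varphi|_{\Gamma_1}=0$ (the last by $L^2$-continuity of the trace on $\Vv$). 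From $\E(\varphi)=0$ one gets $\varphi(x)=a+Bx$ with $a\in\R^d$ and $B$ skew-symmetric; when $B\neq 0$ its rank is even and positive, so the zero set of $\varphi$ is an affine subspace of dimension $\le d-2$ and thus has zero $(d-1)$-dimensional Hausdorff measure. Hence $\varphi|_{\Gamma_1}=0$ with $|\Gamma_1|>0$ forces $a=0$, $B=0$, i.e. $\varphi\equiv 0$, contradicting $\|\varphi\|_\Vv=1$. Taking $c_K$ to be the larger of the two constants obtained completes the proof.

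The main obstacle is Korn's second inequality, and inside it the appeal to Ne\v{c}as' lemma: showing that $w\in H^{-1}$ with $\grad w\in H^{-1}$ implies $w\in L^2$ on a general bounded Lipschitz domain is the genuinely hard analytic ingredient, resting on a duality/solvability argument for the divergence equation on Lipschitz domains. The remaining pieces — the pointwise identity, the Rellich compactness, and the rigid-motion computation — are routine; the one point needing a moment's care is the geometric fact that a nontrivial infinitesimal rigid motion vanishes on a $(d-1)$-negligible set, which is precisely what makes the plain hypothesis $|\Gamma_1|>0$ sufficient.
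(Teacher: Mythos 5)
Your argument is correct, but note that the paper does not prove this lemma at all: it is recalled as a known result and attributed to the literature (Trangenstein, Lemma 5.4.18), so there is no "paper proof" to match. What you supply is the standard self-contained derivation of that cited result: the trivial upper bound, Korn's second inequality obtained from the identity $\pz_j\pz_k\varphi_i=\pz_j(\E(\varphi))_{ik}+\pz_k(\E(\varphi))_{ij}-\pz_i(\E(\varphi))_{jk}$ together with the Lions--Ne\v{c}as lemma on Lipschitz domains, and then the Peetre--Tartar compactness step in which the kernel of $\E$ is identified with infinitesimal rigid motions $a+Bx$ ($B$ skew) and eliminated by the trace condition on $\Gamma_1$ with $|\Gamma_1|>0$. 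The chain of estimates and the rank argument (a nonzero skew $B$ has even rank $\ge 2$, so the zero set of $a+Bx$ is empty or an affine set of dimension $\le d-2$, hence $\mathcal{H}^{d-1}$-negligible) are all sound; you correctly isolate the Ne\v{c}as lemma as the only deep ingredient, and it is legitimate to quote it. Two small points worth making explicit if you wrote this out in full: the rigid-motion classification uses that $\Omega$ is connected (which is implicit in calling it a domain), and $\Vv$ is a closed subspace of $H^1(\Omega;\R^d)$ by continuity of the trace, so the $H^1$-limit of your Cauchy sequence indeed lies in $\Vv$ and inherits $\|\varphi\|_\Vv=1$. With those remarks your proof is complete and is exactly the kind of argument the cited reference contains; the paper simply buys the result by citation, while your route makes the role of the hypothesis $|\Gamma_1|>0$ transparent.
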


\begin{Lem}\label{lem:gronwall}
    {\rm \cite[Lemma 5.1]{HR90}}
  Let $\dt,\beta>0$ and let non-negative sequences 
  $(a_k)^N_{k=0}$, $(b_k)^N_{k=0}$, $(c_k)^N_{k=0}$, 
  $(\alpha_k)^N_{k=0} \subset \{x\in\R : x \ge 0\}$ satisfy that
  \[
      a_n+\dt\sum^m_{k=0}b_k
      \le \dt\sum^m_{k=0}\alpha_k a_k+\dt\sum^m_{k=0}c_k+\beta
      \qquad\mbox{for all }m=0,1,\ldots,N.
  \]
  If $\alpha_k\dt<1$ for all $k=0,1,\ldots,N$, then we have
  \[
      a_n+\dt\sum^m_{k=0}b_k
      \le e^C\left(\dt\sum^m_{k=0}c_k+\beta\right)
      \qquad\mbox{for all }m=0,1,\ldots,N,
  \]
  where $C:=\dt\sum^N_{k=0}\frac{\alpha_k}{1-\alpha_k\dt}$.
\end{Lem}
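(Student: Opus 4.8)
The plan is to give the standard self-contained induction for the discrete Gr\"onwall inequality (the statement is quoted verbatim from \cite[Lemma 5.1]{HR90}). Reading the free index on the left-hand side as the running index $m$, so that the hypothesis reads
\[
   a_m+\dt\sum_{k=0}^m b_k \le \dt\sum_{k=0}^m \alpha_k a_k + \dt\sum_{k=0}^m c_k + \beta
   \qquad (m=0,1,\ldots,N),
\]
I introduce the auxiliary quantity
\[
   \phi_m := \dt\sum_{k=0}^m \alpha_k a_k + \dt\sum_{k=0}^m c_k + \beta ,
   \qquad m=0,1,\ldots,N,
\]
which is precisely the right-hand side of the hypothesis. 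Two consequences will be used throughout: the hypothesis itself gives $a_m+\dt\sum_{k=0}^m b_k \le \phi_m$, hence in particular $a_m\le\phi_m$; and telescoping gives $\phi_m-\phi_{m-1}=\dt\alpha_m a_m+\dt c_m$ for $m\ge1$. It therefore suffices to prove $\phi_m\le e^C\bigl(\dt\sum_{k=0}^m c_k+\beta\bigr)$ for every $m$.

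Next I would derive a one-step recursion for $\phi_m$. Inserting $a_m\le\phi_m$ into the telescoping identity yields $\phi_m\le\phi_{m-1}+\dt\alpha_m\phi_m+\dt c_m$, and since the assumption $\alpha_m\dt<1$ makes $1-\dt\alpha_m$ strictly positive, this rearranges to
\[
   \phi_m \le \lambda_m\left(\phi_{m-1}+\dt c_m\right),
   \qquad \lambda_m:=\frac{1}{1-\dt\alpha_m}\ge1 .
\]
The base case is obtained the same way: at $m=0$ the hypothesis gives $a_0\le\dt\alpha_0 a_0+\dt c_0+\beta$, so $a_0\le\lambda_0(\dt c_0+\beta)$ and hence $\phi_0=\dt\alpha_0 a_0+\dt c_0+\beta\le\lambda_0(\dt c_0+\beta)$. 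Iterating the recursion from $\phi_0$ then gives
\[
   \phi_m \le \left(\prod_{j=0}^m\lambda_j\right)(\dt c_0+\beta)
            + \sum_{i=1}^m\left(\prod_{j=i}^m\lambda_j\right)\dt c_i .
\]

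Finally I bound all the products by the single constant $e^C$. Using $\frac{1}{1-x}=1+\frac{x}{1-x}\le\exp\!\bigl(\frac{x}{1-x}\bigr)$ for $0\le x<1$ with $x=\dt\alpha_j$, together with $\lambda_j\ge1$ and nonnegativity of every term, each partial product satisfies
\[
   \prod_{j=i}^m\lambda_j \le \prod_{j=0}^N\lambda_j
   \le \exp\!\left(\dt\sum_{j=0}^N\frac{\alpha_j}{1-\dt\alpha_j}\right) = e^C .
\]
Substituting this back yields $\phi_m\le e^C(\dt c_0+\beta)+e^C\sum_{i=1}^m\dt c_i=e^C\bigl(\dt\sum_{k=0}^m c_k+\beta\bigr)$, and combining with $a_m+\dt\sum_{k=0}^m b_k\le\phi_m$ gives the claimed estimate. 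There is no genuine obstacle here — this is a textbook lemma — the only points needing a little care are keeping the index bookkeeping consistent and observing that it is exactly $\lambda_j\ge1$ which allows every partial product over a sub-range of $\{0,\ldots,N\}$ to be replaced by the full product, hence by $e^C$.
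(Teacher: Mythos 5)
Your proof is correct. Note that the paper itself gives no proof of this lemma --- it is quoted directly from \cite[Lemma 5.1]{HR90} --- so there is nothing to diverge from; your argument (absorbing the $\dt\alpha_m a_m$ term using $\alpha_m\dt<1$, iterating the one-step recursion for the majorant $\phi_m$, and bounding every partial product of the factors $\lambda_j=(1-\dt\alpha_j)^{-1}\ge 1$ by $e^C$ via $1+y\le e^y$) is exactly the standard Heywood--Rannacher induction, with the index bookkeeping, the base case $\phi_0\le\lambda_0(\dt c_0+\beta)$, and the reading of the misprinted $a_n$ as $a_m$ all handled correctly.
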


We prepare the following lemma, which is derived using the Ascoli--Arzel{\`{a}} theorem.

\begin{Lem}\label{Lem:AA}
    Let $X$ be a Banach space such that $X^*$ is separable. If the sequence $(u_n)_{n\in\N}\subset C([0,T];X)$ satisfies the following two conditions:
    \begin{enumerate}
        \item[(i)] there exists a constant $c>0$ such that for all $n\in\N$ and $t\in[0,T]$, $\|u_n(t)\|_X < c$,
        \item[(ii)] $(u_n)$ is equicontinuous, i.e., for all $t\in[0,T]$ and $\eps>0$, there exists $\delta>0$ such that if $s\in[0,T]$ satisfies $|s-t|<\delta$, then $\|u_n(s) - u_n(t)\|_X < \eps$ for all $n\in\N$,
    \end{enumerate}
    then there exist a subsequence $(n_k)_{k\in\N}$ and $u\in C([0,T];X_w)$ such that for all $t\in[0,T]$,
    \[
        u_{n_k}(t) \rightharpoonup u(t) \mbox{ weakly in } X
    \]
    as $k\rightarrow\infty$.
\end{Lem}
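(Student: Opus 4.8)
Let $c>0$ be the bound from (i) and set $B:=\{x\in X:\|x\|_X\le c\}$, so that $u_n(t)\in B$ for every $n\in\N$ and $t\in[0,T]$. The plan is to reduce the claim to the classical Ascoli--Arzel\`a theorem after replacing the weak topology on $B$ by a metric. Since $X^*$ is separable, fix a countable set $\{f_j\}_{j\in\N}$ dense in the closed unit ball of $X^*$ and put
\[
  \rho(x,y):=\sum_{j\in\N}2^{-j}\,|\langle f_j,x-y\rangle_{X^*,X}|\qquad(x,y\in B),
\]
which is well defined (each term is $\le 2^{1-j}c$), is a metric on $B$, and — by the standard fact that a separable dual metrizes the weak topology on bounded sets — induces on $B$ exactly the weak topology of $X$. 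Moreover $B$ is weakly compact (a closed ball in a reflexive space, which covers the applications of the lemma; see the last paragraph), so $(B,\rho)$ is a compact metric space, and a map $[0,T]\to B$ is $\rho$-continuous if and only if it is weakly continuous. In particular each $u_n$ is a continuous map $[0,T]\to(B,\rho)$.

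The key point is that (ii) turns $(u_n)$ into an equicontinuous family of maps into $(B,\rho)$: given $\eps>0$ pick $m$ with $2^{1-m}c<\eps/2$ and, using (ii), $\delta>0$ with $\|u_n(s)-u_n(t)\|_X<\eps/2$ whenever $|s-t|<\delta$; since $\|f_j\|_{X^*}\le 1$, the tail $\sum_{j\ge m}$ is $<\eps/2$ and the head $\sum_{j<m}$ is $<\eps/2$, so $\rho(u_n(s),u_n(t))<\eps$ for all $n$. By the Ascoli--Arzel\`a theorem applied in $C([0,T];(B,\rho))$ — the target being a compact metric space, so pointwise precompactness is automatic — there is a subsequence $(u_{n_k})$ converging uniformly on $[0,T]$, in the metric $\rho$, to some $u\in C([0,T];(B,\rho))$. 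Uniform $\rho$-convergence gives $\rho(u_{n_k}(t),u(t))\to0$, i.e.\ $u_{n_k}(t)\rightharpoonup u(t)$ weakly in $X$, for every $t\in[0,T]$, and $\rho$-continuity of $u$ is precisely weak continuity, so $u\in C([0,T];X_w)$, which is the assertion. (Alternatively, one can bypass the abstract Ascoli statement: extract by a diagonal procedure a subsequence along which $\langle f_j,u_n(q)\rangle_{X^*,X}$ converges for every $j$ and every $q$ in a countable dense subset of $[0,T]$, and then use (ii) and the density of $\{f_j\}$ to upgrade this to convergence of $\langle f,u_{n_k}(t)\rangle_{X^*,X}$ for all $f\in X^*$ and all $t$.)

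The one delicate step — and the real obstacle — is making sure the pointwise limits actually lie in $X$: the diagonal argument by itself only yields a bounded linear functional on $X^*$, i.e.\ an element of $X^{**}$, and identifying it with an element of $X$ is exactly the weak compactness of $B$ invoked above. For a general Banach space with separable dual this step genuinely fails (for instance $X=c_0$ with a constant-in-$t$ sequence), so the lemma should be read with $X$ reflexive; in all uses here $X$ is a Hilbert space ($\Hs$, $\Hv$), hence reflexive, and balls are weakly compact, so the argument goes through. Everything else — that $\rho$ metrizes the weak topology on $B$, the equivalence of $\rho$-continuity and weak continuity, and the uniform estimates — is routine.
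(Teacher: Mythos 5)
Your argument is correct and follows essentially the same route as the paper: metrize the weak topology on the bounded ball $B$ using separability of $X^*$ (you construct the metric explicitly; the paper invokes Brezis, Theorem 3.29), observe that hypothesis (ii) makes $(u_n)$ equicontinuous for that metric, and apply the Ascoli--Arzel{\`{a}} theorem in $C([0,T];(B,\rho))$. Your caveat is also well taken: separability of $X^*$ alone does not give weak (sequential) compactness of $B$ --- your $c_0$ example shows the statement fails in that generality, and the paper's proof uses this compactness tacitly when asserting that $(B,d)$ is compact --- but the lemma is only ever applied with $X=\Hs$ or $X=\Hv$, which are Hilbert spaces, so reflexivity supplies the missing compactness in every use, exactly as you note.
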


\begin{proof}
Let $B\defeq\{v\in X:\|v\|_X\le c\}$. There exists a metric $d:B\times B\rightarrow\R$, which induces the weak topology on $X$ and satisfies $d(v,w)\le\|v-w\|_X$ for all $v,w\in B$ \cite[Theorem 3.29]{Bre11}. Given the conditions (i) and (ii), the sequence $(u_n)_{n\in\N}$ belongs to $C([0,T];(B,d))$ and is equicontinuous in the metric space $(B,d)$. Moreover, the metric space $(B,d)$ is compact. By the Ascoli--Arzel{\`{a}} theorem \cite[Theorem 4.25]{Bre11}, there exist a subsequence $(n_k)_{k\in\N}\subset\N$ and $u\in C([0,T];(B,d))$ such that for all $t\in[0,T]$,
\[
    d(u_{n_k}(t), u(t))\rightarrow 0
\]
as $k\rightarrow\infty$, leading to the conclusion.
\end{proof}

We show some properties of the function $\Pop{R},\Phi:\Sym_d\rightarrow\Sym_d$.

\begin{Prop}\label{Prop:Phi}
    \begin{enumerate}
        \item[(i)] It holds that for all $R\ge0$ and $A \in \Sym_d$,
        \[\begin{aligned}
            (E_d, A^D) 
            = \left(E_d, R\Phi\left(\frac{A^D}{R}\right)\right) = 0,\quad
            |\Pop{R}(A)|^2 
            = \left|\frac{\tr A}{d}E_d\right|^2 + \left|R\Phi\left(\frac{A^D}{R}\right)\right|^2.
        \end{aligned}\]
        \item[(ii)] It holds that for all $R_1, R_2\ge0$ and $A \in \Sym_d$,
        \[
            |\Pop{R_1}(A) - \Pop{R_2}(A)| \le |R_1 - R_2|.
        \]
        \item[(iii)] It holds that for all $R\ge0$ and $A, B \in \Sym_d$ with $|B^D|\le R$,
        \[
            (\Pop{R}(A) - A, \Pop{R}(A) - B) \le 0.
        \]
        \item[(iv)] It holds that for all $R\ge0$ and $A, B \in \Sym_d$,
        \[
            |\Pop{R}(A) - \Pop{R}(B)| \le |A - B|.
        \]
    \end{enumerate}
\end{Prop}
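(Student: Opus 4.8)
The plan is to reduce all four assertions to two elementary facts and a single structural observation. The observation is that for every $R\ge0$ one has
$\Pop{R}(A)=\frac{\tr A}{d}E_d+\psi_R(A^D)$, where $\psi_R\colon\Sym_d\to\Sym_d$ is defined by $\psi_R(B):=R\Phi(B/R)$ for $R>0$ and $\psi_0(B):=0$; checking the two defining cases of $\Phi$ shows that $\psi_R$ is nothing but the metric projection of the Frobenius Hilbert space $\Sym_d$ onto the closed ball $\ov{B}_R:=\{X\in\Sym_d:|X|\le R\}$, with the explicit radial expression $\psi_R(B)=\min\{1,R/|B|\}\,B$ for $B\ne0$ (and $\psi_R(0)=0$). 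The two elementary facts are: (a) $\Sym_d$ splits orthogonally as $\R E_d\oplus\Sym_d^0$, $\Sym_d^0$ being the trace-free subspace, with $A=\frac{\tr A}{d}E_d+A^D$ and $A^D\in\Sym_d^0$; and (b) since $\psi_R(A^D)$ is a nonnegative scalar multiple of $A^D$, it again lies in $\Sym_d^0$.

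First I would record (a)--(b) and the identification $\psi_R=P_{\ov{B}_R}$. Then (i) is immediate: $(E_d,A^D)=\tr A^D=0$ and $(E_d,\psi_R(A^D))=\tr(\psi_R(A^D))=0$, while the norm identity is just the Pythagorean theorem applied to the orthogonal sum $\frac{\tr A}{d}E_d+\psi_R(A^D)$ (with the convention, forced by the definition of $\Pop{0}$, that the term $R\Phi(A^D/R)$ is read as its limiting value $0$ when $R=0$). For (ii), the spherical parts of $\Pop{R_1}(A)$ and $\Pop{R_2}(A)$ cancel, so it suffices to bound $|\psi_{R_1}(A^D)-\psi_{R_2}(A^D)|$; for $A^D\ne0$ the radial formula gives this equals $|A^D|\,\bigl|\min\{1,R_1/|A^D|\}-\min\{1,R_2/|A^D|\}\bigr|$, and since $t\mapsto\min\{1,t\}$ is $1$-Lipschitz on $[0,\infty)$ this is at most $|R_1-R_2|$; the case $A^D=0$ is trivial.

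For (iii), writing $B=\frac{\tr B}{d}E_d+B^D$ and using $\Pop{R}(A)-A=\psi_R(A^D)-A^D\in\Sym_d^0$ to annihilate the $E_d$-component of $\Pop{R}(A)-B$, the claim reduces to $(\psi_R(A^D)-A^D,\psi_R(A^D)-B^D)\le0$; as $|B^D|\le R$ means $B^D\in\ov{B}_R$, this is exactly the variational characterization of the projection $\psi_R=P_{\ov{B}_R}$ (equivalently, a two-line computation splitting on $|A^D|\le R$ versus $|A^D|>R$). For (iv), decomposing $\Pop{R}(A)-\Pop{R}(B)=\frac{\tr A-\tr B}{d}E_d+\bigl(\psi_R(A^D)-\psi_R(B^D)\bigr)$ into its orthogonal components, and invoking nonexpansiveness of the metric projection, $|\psi_R(A^D)-\psi_R(B^D)|\le|A^D-B^D|$, yields $|\Pop{R}(A)-\Pop{R}(B)|^2\le\bigl|\tfrac{\tr A-\tr B}{d}E_d\bigr|^2+|A^D-B^D|^2=|A-B|^2$, the last equality being Pythagoras applied to $A-B$ (note $(A-B)^D=A^D-B^D$).

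I do not expect a serious obstacle: once $\psi_R$ is recognized as a metric projection, (i), (iii) and (iv) are instances of textbook Hilbert-space projection facts combined with the trace/deviator splitting. The only points requiring a little care are the degenerate case $R=0$ (where $R\Phi(A^D/R)$ must be read as its limit $0$), and part (ii), where the two matrices compared are projections onto balls of \emph{different} radii, so nonexpansiveness of a single projection does not apply; there one must instead exploit the explicit radial formula for $\psi_R$ together with the Lipschitz bound for $t\mapsto\min\{1,t\}$.
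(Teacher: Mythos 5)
Your proposal is correct. It rests on the same structural ingredients as the paper's proof---the orthogonal splitting of $\Sym_d$ into $\R E_d$ and the trace-free subspace, and the fact that $\Pop{R}$ acts as the identity on the spherical part and as radial truncation on the deviatoric part---but you package the deviatoric action as the metric projection onto the Frobenius ball $\{X\in\Sym_d:|X|\le R\}$ and then import textbook Hilbert-space facts: your (iii) is the variational characterization of that projection and your (iv) its nonexpansiveness. The paper instead verifies (iii) by a direct inner-product computation on deviatoric parts (splitting on $|A^D|\le R$ versus $|A^D|>R$) and then obtains (iv) from (iii) by adding the two symmetric inequalities, which is precisely the standard derivation of nonexpansiveness from the variational inequality, so the two routes carry the same mathematical content; the paper's is self-contained, yours is shorter and explains all four parts as instances of one phenomenon. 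Two points where your write-up is actually more explicit than the paper: in (ii), your radial formula $\psi_R(B)=\min\{1,R/|B|\}\,B$ combined with the $1$-Lipschitz bound for $t\mapsto\min\{1,t\}$ justifies an inequality that the paper asserts in a single line without detail (and your formula also covers $R_1=0$ or $R_2=0$); and your explicit convention that $R\Phi(A^D/R)$ is read as $0$ when $R=0$ makes precise a degenerate case the paper leaves implicit.
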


\begin{proof}
    (i) It holds that for all $R\ge0$ and $A \in \Sym_d$,
    \begin{align*}
        (E_d, A^D) &= \left(E_d, A - \frac{\tr A}{d}E_d\right)
        = \tr A - \tr A = 0,\\
        \left(E_d, R\Phi\left(\frac{A^D}{R}\right)\right) &= 0,
    \end{align*}
    and hence 
    \begin{align*}
        |\Pop{R}(A)|^2 
        &= \left|\frac{\tr A}{d}E_d\right|^2 
        + 2\frac{\tr A}{d}\left(E_d, R\Phi\left(\frac{A^D}{R}\right)\right)
        + \left|R\Phi\left(\frac{A^D}{R}\right)\right|^2\\
        &= \left|\frac{\tr A}{d}E_d\right|^2 
        + \left|R\Phi\left(\frac{A^D}{R}\right)\right|^2. 
    \end{align*}

    (ii) It holds that for all $R_1, R_2\ge0$ and $A \in \Sym_d$,
    \[
        |\mathcal{P}_{R_1}(A) - \mathcal{P}_{R_2}(A)|
        = \left|R_1\Phi\left(\frac{A^D}{R_1}\right) - R_2\Phi\left(\frac{A^D}{R_2}\right)\right|
        \le |R_1 - R_2|.
    \]
    
    (iii) If $A\in\Sym_d$ and $R\ge0$ satisfy the condition that $|A^D| \le R$, it follows that $\Pop{R}(A) = A$ and for all $B \in \Sym_d$,
    \[
        (\Pop{R}(A) - A, \Pop{R}(A) - B)
        = (A - A, \Pop{R}(A) - B) = 0.
    \]
    Thus, we consider the case where $|A^D| > R$. For all $B \in \Sym_d$ with $|B^D|\le R$,
    \[\begin{aligned}
        &(\Pop{R}(A) - A, \Pop{R}(A) - B)\\
        =& \left(R\Phi\left(\frac{A^D}{R}\right) - A^D, 
        \frac{\tr (A-B)}{d}E_d + R\Phi\left(\frac{A^D}{R}\right) - B^D\right)\\
        =& \left(R - |A^D|\right)\left(R - \frac{(A^D, B^D)}{|A^D|}\right)
        \le \left(R - |A^D|\right)\left(R - |B^D|\right)
        \le 0.
    \end{aligned}\]
    
    (iv) By (iii), it holds that for all $R\ge0$ and $A, B \in \Sym_d$,
    \[\begin{aligned}
        (\Pop{R}(A) - A, \Pop{R}(A) - \Pop{R}(B)) \le 0,\\
        (\Pop{R}(B) - B, \Pop{R}(B) - \Pop{R}(A)) \le 0,
    \end{aligned}\]
    and hence,
    $
        |\Pop{R}(A) - B| \le |A - B|.
    $
    By adding the two inequalities, we obtain
    \[
        |\Pop{R}(A) - \Pop{R}(B)|^2 
        \le (A - B, \Pop{R}(A) - \Pop{R}(B))
        \le |A - B||\Pop{R}(A) - \Pop{R}(B)|,
    \]
    which implies the conclusion.
\end{proof}

\subsection{Stability}

\begin{proof}[Proof of Theorem \ref{Th:stab}]
    (i) Putting $\varphi=v_n$ in the first equation of \eqref{proj}, we obtain 
    \begin{align*}
        \inner{\frac{v_n-v_{n-1}}{\dt}}{v_n}{\Hv}
        + \nu (\E(v_n),\E(v_n))_\Hs + (\sigmas{n},\E(v_n))_\Hs
        = \blaket{f_n}{v_n}.
    \end{align*}
    Applying the Korn inequality leads to
    \begin{align}\label{stab_ineq1}\begin{aligned}
        &\frac{1}{2\dt}\left(\|v_n\|_\Hv^2-\|v_{n-1}\|_\Hv^2+\|v_n-v_{n-1}\|_\Hv^2\right)
        + \nu\|\E(v_n)\|_\Hs^2 + (\sigmas{n},\E(v_n))_\Hs\\
        \le& \|f_n\|_{\Vv^*}\|v_n\|_\Vv
        \le c_K\|f_n\|_{\Vv^*}\|\E(v_n)\|_\Hs
        \le \frac{\nu}{4}\|\E(v_n)\|_\Hs^2
        + \frac{c_K^2}{\nu}\|f_n\|_{\Vv^*}^2.
    \end{aligned}\end{align}
    Multiplying the second equation of \eqref{proj} by $\sigmas{n}+\tsigma{n}$, we have
    \begin{align}\label{stab_ineq2}\begin{aligned}
        0=&\left(\frac{\sigmas{n}-\sigma_{n-1}}{\dt} - \E(v_n) - h_n,
        \sigmas{n}+\tsigma{n}\right)_\Hs \\
        %
        %
        \ge &\frac{1}{2\dt}(\|\sigmas{n}+\tsigma{n}\|_\Hs^2 
        - \|\sigma_{n-1}+\tsigma{n-1}\|_\Hs^2)
        - (\E(v_n),\sigmas{n})_\Hs - \|\E(v_n)\|_\Hs\|\tsigma{n}\|_\Hs\\
        &- \left(\left\|\frac{\tsigma{n}-\tsigma{n-1}}{\dt}\right\|_\Hs+\|h_n\|_\Hs\right)\|\sigmas{n}+\tsigma{n}\|_\Hs\\
        \ge &\frac{1}{2\dt}(\|\sigmas{n}+\tsigma{n}\|_\Hs^2 
        - \|\sigma_{n-1}+\tsigma{n-1}\|_\Hs^2) 
        - (\E(v_n),\sigmas{n})_\Hs - \frac{\nu}{4}\|\E(v_n)\|_\Hs^2\\
        &- \frac{1}{4}\|\sigmas{n}+\tsigma{n}\|_\Hs^2
        - \frac{1}{\nu}\|\tsigma{n}\|_\Hs^2
        - 2\left(\left\|\frac{\tsigma{n}-\tsigma{n-1}}{\dt}\right\|_\Hs^2+\|h_n\|_\Hs^2\right).
    \end{aligned}\end{align}
    The third equation of \eqref{proj} and Proposition \ref{Prop:Phi}(iii) imply that for all $n=2,\ldots,N+1$,
    \begin{align}\label{stab_ineq3}\begin{aligned}
        0&\ge \left((\sigma_{n-1}+\tsigma{n-1})-(\sigmas{n-1}+\tsigma{n-1}),\sigma_{n-1}+\tsigma{n-1}\right)_\Hs\\
        &= \frac{1}{2}(\|\sigma_{n-1}+\tsigma{n-1}\|_\Hs^2 - \|\sigmas{n-1}+\tsigma{n-1}\|_\Hs^2 + \|\sigma_{n-1}-\sigmas{n-1}\|_\Hs^2).
    \end{aligned}\end{align}
    If we set $\sigmas{0}\defeq\sigma_0$, then \eqref{stab_ineq3} holds for $n=1$.
    By \eqref{stab_ineq1} , \eqref{stab_ineq2}, and \eqref{stab_ineq3}, we have 
    \begin{align}\label{stab_ineq4}\begin{aligned}
        &\frac{1}{2\dt}\left(\|v_n\|_\Hv^2-\|v_{n-1}\|_\Hv^2+\|v_n-v_{n-1}\|_\Hv^2\right)\\
        &+ \frac{1}{2\dt}(\|\sigmas{n}+\tsigma{n}\|_\Hs^2
        - \|\sigmas{n-1}+\tsigma{n-1}\|_\Hs^2
        + \|\sigma_{n-1}-\sigmas{n-1}\|_\Hs^2) + \frac{\nu}{2}\|\E(v_n)\|_\Hs^2 \\
        \le& \frac{c_K^2}{\nu}\|f_n\|_{\Vv^*}^2 + \frac{1}{4}\|\sigmas{n}+\tsigma{n}\|_\Hs^2
        + \frac{1}{\nu}\|\tsigma{n}\|_\Hs^2
        + 2\left(\left\|\frac{\tsigma{n}-\tsigma{n-1}}{\dt}\right\|_\Hs^2+\|h_n\|_\Hs^2\right).
    \end{aligned}\end{align}
    By summing up \eqref{stab_ineq4} for $n = 1,2,\ldots,m$, where $m\le N$, we obtain
    \begin{align*}\begin{aligned}
        &\|v_m\|_\Hv^2 + \|\sigmas{m}+\tsigma{m}\|_\Hs^2 
        - \|v_0\|_\Hv^2 - \|\sigma_0+\tsigma{0}\|_\Hs^2\\
        &\quad + \sum_{n=1}^m\left(\|v_n-v_{n-1}\|_\Hv^2 + \|\sigma_{n-1}-\sigmas{n-1}\|_\Hs^2\right)
        + \nu\dt\sum_{n=1}^m\|\E(v_n)\|_\Hs^2\\
        \le& \frac{\dt}{2}\sum_{n=1}^m \|\sigmas{n}+\tsigma{n}\|_\Hs^2
        + c_1\dt\sum_{n=1}^m\left(\|f_n\|_{\Vv^*}^2 + \|p_n\|_\Hs^2 + \left\|\frac{\tsigma{n}-\tsigma{n-1}}{\dt}\right\|_\Hs^2 + \|h_n\|_\Hs^2\right),
    \end{aligned}\end{align*}
    where $c_1\defeq\max\{2c_K^2/\nu, 2/\nu, 4\}$.
    By the discrete Gronwall inequality, if $\dt\le1$, then it holds that for all $m=1,2,\ldots,N$,
    \begin{align*}\begin{aligned}
        &\|v_m\|_\Hv^2 + \|\sigmas{m}+\tsigma{m}\|_\Hs^2
        + \sum_{n=1}^m\left(\|v_n-v_{n-1}\|_\Hv^2 + \|\sigma_{n-1}-\sigmas{n-1}\|_\Hs^2\right)
        + \nu\dt\sum_{n=1}^m\|\E(v_n)\|_\Hs^2\\
        \le& e\Bigg(\|v_0\|_\Hv^2 + \|\sigma_0+\tsigma{0}\|_\Hs^2
        + c_1\dt\sum_{n=1}^m\left(\|f_n\|_{\Vv^*}^2 + \|p_n\|_\Hs^2 + \left\|\frac{\tsigma{n}-\tsigma{n-1}}{\dt}\right\|_\Hs^2 + \|h_n\|_\Hs^2\right)\Bigg),
    \end{aligned}\end{align*}
    and hence, by \eqref{stab_ineq3}, 
    \begin{align*}\begin{aligned}
        &\|v_m\|_\Hv^2 + \frac{1}{2}\|\sigmas{m}+\tsigma{m}\|_\Hs^2
        + \frac{1}{2}\|\sigma_m+\tsigma{m}\|_\Hs^2\\
        &+ \sum_{n=1}^m\left(\|v_n-v_{n-1}\|_\Hv^2 + \frac{1}{2}\|\sigma_{n}-\sigmas{n}\|_\Hs^2\right)
        + \nu\dt\sum_{n=1}^m\|\E(v_n)\|_\Hs^2\\
        \le& c_2\Bigg(\|v_0\|_\Hv^2 + \|\sigma_0\|_\Hs^2 + \|\tsigma{0}\|_\Hs^2\\
        &+ \dt\sum_{n=1}^N\left(\|f_n\|_{\Vv^*}^2 + \|p_n\|_\Hs^2 + \left\|\frac{\tsigma{n}-\tsigma{n-1}}{\dt}\right\|_\Hs^2 + \|h_n\|_\Hs^2\right)\Bigg),
    \end{aligned}\end{align*}
    where $c_2=ec_1$. 
    Since we have that for all $n=1,2,\ldots,N$,
    \begin{align*}\begin{aligned}
        \|f_n\|_{\Vv^*}^2
        &= \left\|\frac{1}{\dt}\int_{t_{n-1}}^{t_n}f(s)ds\right\|_{\Vv^*}^2
        \le \frac{1}{\dt}\|f\|_{L^2(t_{n-1},t_n;\Vv^*)}^2,\\
        \left\|\frac{\tsigma{n}-\tsigma{n-1}}{\dt}\right\|_\Hs^2
        &
        \le \frac{1}{\dt}\left\|\prd{p}{t}\right\|_{L^2(t_{n-1},t_n;\Hs)}^2,\quad
        \|h_n\|_{\Hs}^2
        \le \frac{1}{\dt}\|h\|_{L^2(t_{n-1},t_n;\Hs)}^2,\\
    \end{aligned}\end{align*}
    and 
    \[
        \|p_0\|_\Hs^2 + \dt\sum_{n=1}^N\|p_n\|_\Hs^2
        \le \|p\|_{C([0,T];\Hs)}^2\left(1 + \dt\sum_{n=1}^N 1\right)
        \le 2\|p\|_{C([0,T];\Hs)}^2,
    \]
    we obtain that for all $t\in[0,T]$,
    \begin{align*}\begin{aligned}
        &\|\bar{v}_\dt(t)\|_\Hv^2 
        + \frac{1}{2}\left\|\bar{\sigma}_\dt^*(t)+\bar{p}_\dt(t)\right\|_\Hs^2
        + \frac{1}{2}\left\|\bar{\sigma}_\dt(t)+\bar{p}_\dt(t)\right\|_\Hs^2\\
        &+ \int_0^t\left(\frac{1}{\dt}\|\hat{v}_\dt(s)-\bar{v}_\dt(s)\|_\Hv^2 
        + \frac{1}{2\dt}\left\|\bar{\sigma}_\dt(s) - \bar{\sigma}_\dt^*(s)\right\|_\Hs^2 + \nu\|\E(\bar{v}_\dt(s))\|_\Hs^2\right)ds\\
        \le& c_3(\|v_0\|_\Hv^2 + \|\sigma_0\|_\Hs^2 + \|f\|_{L^2(\Vv^*)}^2 + \|p\|_{H^1(\Hs)}^2 + \|h\|_{L^2(\Hs)}^2)
    \end{aligned}\end{align*}
    for a constant $c_3>0$, where we have used $C([0,T];\Hs)$ is continuously embedded to $H^1(\Hs)$.

    Furthermore, by the first equation of \eqref{proj} and the Korn inequality, it hold that 
    \begin{align*}
        \left\|\prd{\hat{v}_\dt}{t}\right\|_{L^2(\Vv^*)}^2
        &= \dt\sum_{n=1}^N \left(\sup_{0 \ne \varphi \in \Vv}\frac{1}{\|\varphi\|_\Vv}\left|\inner{\frac{v_n-v_{n-1}}{\dt}}{\varphi}{\Hv}\right|\right)^2 \\
        &\le 3c_K^2\nu^2\|\E(\bar{v}_\dt)\|_{L^2(\Hs)}^2 + 3c_K^2\|\bar{\sigma}_\dt^*\|_{L^2(\Hs)}^2 + 3\|\bar{f}_\dt\|_{L^2(\Vv^*)}^2,
    \end{align*}
    and hence, $(\|\prd{\hat{v}_\dt}{t}\|_{L^2(\Vv^*)})_{0<\dt<1}$ is bounded.

    (ii) By the second and third equations of \eqref{proj} and Proposition \ref{Prop:Phi} (ii) and (iv), we have for all $n=1, 2, \ldots, N$, 
    \begin{align*}
        &\quad~\left|(\sigma_n + p_n) - (\sigma_{n-1} + p_{n-1})\right| 
        = |\Pop{g_n}(\sigmas{n} + p_n) - \Pop{g_{n-1}}(\sigma_{n-1} + p_{n-1})| \\
        &\le |\Pop{g_n}(\sigmas{n} + p_n) - \Pop{g_{n-1}}(\sigmas{n} + p_n)| 
        + |\Pop{g_{n-1}}(\sigmas{n} + p_n) - \Pop{g_{n-1}}(\sigma_{n-1} + p_{n-1})|\\
        &\le |g_n - g_{n-1}| + |(\sigmas{n} + p_n) - (\sigma_{n-1} + p_{n-1})|\\
        %
        &\le \dt(|\E(v_n)| + |h_n|) + |p_n - p_{n-1}| + |g_n - g_{n-1}|
    \end{align*}
    on a.e. $\Omega$, and hence,
    \begin{align}\label{ineq:dsigma}\begin{aligned}        
        \|\sigma_n - \sigma_{n-1}\|_\Hs^2
        \le c_1\left(\dt^2(\|\E(v_n)\|_\Hs^2 + \|h_n\|_\Hs^2) 
        + \|p_n - p_{n-1}\|_\Hs^2 + \|g_n - g_{n-1}\|_\Lo^2\right),
    \end{aligned}\end{align}
    where $c_1 \defeq 7$.
    Since we have 
    \begin{align*}
        & \left\|\prd{\hat{\sigma}_\dt}{t}\right\|_{L^2(\Hs)}^2
        =\dt\sum_{n=1}^{N}\left\|\frac{\sigma_n-\sigma_{n-1}}{\dt}\right\|_\Hs^2\\
        \le& c_1\left(\|\E(\bar{v}_\dt)\|_{L^2(\Hs)}^2
        + \|\bar{h}_\dt\|_{L^2(\Hs)}^2
        + \left\|\prd{\hat{p}_\dt}{t}\right\|_{L^2(\Hs)}^2
        + \left\|\prd{\hat{g}_\dt}{t}\right\|_{L^2(\Lo)}^2\right),
    \end{align*}
    we obtain that the sequence $(\hat{\sigma}_\dt)$ is bounded in $H^1(\Hs)$.
\end{proof}



\subsection{Existence and uniqueness of solution to Problem \ref{Prob}}

\begin{proof}[Proof of Theorem \ref{Th:exact}]
    (Existence) According to Theorem \ref{Th:stab}, it can be shown that there exist a sequence $(\dt_k)_{k\in\N}$ and two functions 
    $v \in H^1(\Vv^*)\cap L^2(\Vv)$ (in particular, $v\in C([0,T];\Hv)$) and $\sigma \in H^1(\Lo)$ such that $\dt_k\rightarrow 0$ and for all $t\in[0,T]$
    \begin{align}\label{convvhwh}
        \hat{v}_{\dt_k}\rightharpoonup v
        &\quad\text{weakly in }H^1(\Vv^*),\\
        &\quad\begin{aligned}\label{convvhwl}
            \text{weakly star in }L^\infty([0,T];\Hv),
        \end{aligned}\\
        \hat{v}_{\dt_k}(t) \rightarrow v(t)
        &\quad\begin{aligned}\label{convvhwc}
            \text{strongly in }\Hv,
        \end{aligned}\\
        \bar{v}_{\dt_k}\rightharpoonup v
        &\quad\begin{aligned}\label{convvbwl}
            \text{weakly in }L^2(\Vv),
        \end{aligned}\\
        \hat{\sigma}_{\dt_k} \rightharpoonup \sigma
        &\quad\begin{aligned}\label{convshwh}
            \text{weakly in }H^1(\Hs),
        \end{aligned}\\
        \hat{\sigma}_{\dt_k}(t) \rightharpoonup \sigma(t)
        &\quad\begin{aligned}\label{convshwc}
            \text{weakly in }\Hs,
        \end{aligned}\\
        \bar{\sigma}^*_{\dt_k} \rightharpoonup \sigma
        &\quad\begin{aligned}\label{convsswl}
            \text{weakly star in }L^\infty(\Hs),
        \end{aligned}\\
        \bar{\sigma}_{\dt_k} \rightharpoonup \sigma
        &\quad\begin{aligned}\label{convsbwl}
            \text{weakly star in }L^\infty(\Hs),
        \end{aligned}
    \end{align}
    as $k\rightarrow\infty$. It should be noted that $(\hat{v}_{\dt_k})_{k\in\N}$ and $(\bar{v}_{\dt_k})_{k\in\N}$ possess a common limit function $v$ and $(\bar{\sigma}^*_{\dt_k})_{k\in\N}$ and $(\bar{\sigma}_{\dt_k})_{k\in\N}$ possess a common limit function $\sigma$. Indeed, the weak convergences \eqref{convvhwh}, \eqref{convvhwl}, \eqref{convvbwl}, \eqref{convshwh}, \eqref{convshwc}, \eqref{convsswl}, and \eqref{convsbwl} immediately follows by Theorem \ref{Th:stab} and Lemma \ref{Lem:AA}.
    If we define $\hat{v}_\dt^\circ \in C([0,T];\Vv)$ by 
    \[
        \hat{v}_\dt^\circ(t)\defeq\left\{\begin{aligned}
            &v_1 &&\mbox{ if }t\in[0,\dt],\\
            &\hat{v}_\dt(t) &&\mbox{ if }t\in[\dt,T],
        \end{aligned}\right.
    \]
    then we obtain that $(\hat{v}_\dt^\circ)_{0<\dt<1}$ is bounded in $H^1(\Vv^*)$ and $L^\infty(\Hv)$, and hence, 
    \[
        \hat{v}_{\dt_k}^\circ \rightarrow v 
        \quad\mbox{strongly in }C([0,T];\Hv)
    \]
    as $k\rightarrow\infty$, by the Aubin--Lions theorem \cite[Theorem II.5.16]{BF13}.
    For all $t\in(0,T]$, there exists $l\in\N$ such that $\dt_l \le t$. Since it holds that $\hat{v}_{\dt_k}^\circ(t)=\hat{v}_{\dt_k}(t)$ for all $k \ge l$, \eqref{convvhwc} holds.

    Since we have for all $t \in (t_{n-1},t_n), n=1,2,\ldots, N$, 
    \[\begin{aligned}
        \|\hat{\sigma}_\dt(t) - \bar{\sigma}_\dt(t)\|_{\Hs}
        &= \left\|\sigma_{n-1} + \frac{t-t_{n-1}}{\dt}(\sigma_n-\sigma_{n-1}) - \sigma_n\right\|_{\Hs}
        \le \dt\left\|\prd{\hat{\sigma}_\dt}{t}\right\|_{\Hs},
    \end{aligned}\]
    it holds that $\|\hat{\sigma}_\dt - \bar{\sigma}_\dt\|_{L^2(\Hs)}^2 \le \dt\|\hat{\sigma}_\dt\|_{H^1(\Hs)}^2$, and hence, the functions $\hat{\sigma}_\dt$ and $\bar{\sigma}_\dt$ possess a common limit function $\sigma$.

    Next, we demonstrate that the limit functions $(v, \sigma)$ satisfy \eqref{P} and $\sigma(t) \in K(t)$ for all $t \in [0, T]$.

    By the third equation of \eqref{proj} with $\dt\defeq\dt_k$, it holds that for all $t\in[t_{n-1},t_n],n=1,2,\ldots,N$,
    \[\begin{aligned}
        |(\hat{\sigma}_{\dt_k}(t) + \hat{p}_{\dt_k}(t))^D| 
        &= \left|\frac{t-t_{n-1}}{\dt}(\sigma_n+p_n)^D 
        + \frac{t_n-t}{\dt}(\sigma_{n-1} + p_{n-1})^D\right|\\
        &\le \frac{t-t_{n-1}}{\dt}g_n + \frac{t_n-t}{\dt}g_{n-1}
        = \hat{g}_{\dt_k}(t)
    \end{aligned}\]
    a.e. on $\Omega$, i.e. $\hat{\sigma}_{\dt_k}(t) + \hat{p}_{\dt_k}(t) \in \left\{\tau\in\Hs : |\tau^D|\le \hat{g}_{\dt_k}(t) \mbox{ a.e. in }\Omega\right\}$ for all $t\in[0,T]$. 
    By $g\in H^1(\Lo)$, we have that $\hat{g}_\dt \rightarrow g$ strongly in $C([0,T];\Lo)$. By the Riesz--Fischer theorem, $\hat{g}_\dt(t) \rightarrow g(t)$ a.e. on $\Omega$ for all $t\in[0,T]$. For fixed $k\in\N$ and $t\in[0,T]$, since 
    \[
        \left\{\tau\in\Hs : |\tau^D|\le \sup_{l \ge k}\hat{g}_{\dt_l}(t) \mbox{ a.e. in }\Omega\right\}    
    \] 
    is closed and convex in the strong topology of $\Hs$, it is also closed and convex in the weak topology of $\Hs$. By \eqref{convshwc}, we obtain for all $k\in\N$,
    \[
        \sigma(t)+p(t) \in \left\{\tau\in\Hs : |\tau^D|\le \sup_{l \ge k}\hat{g}_{\dt_l}(t) \mbox{ a.e. in }\Omega\right\},
    \]
    and hence, 
    \[\begin{aligned}
        \sigma(t)+p(t) &\in \bigcap_{k\in\N}\left\{\tau\in\Hs : |\tau^D|\le \sup_{l \ge k}\hat{g}_{\dt_l}(t) \mbox{ a.e. in }\Omega\right\}\\
        &= \left\{\tau\in\Hs : |\tau^D|\le \inf_{k\in\N}\sup_{l \ge k}\hat{g}_{\dt_l}(t) \mbox{ a.e. in }\Omega\right\}\\
        &= \left\{\tau\in\Hs : |\tau^D|\le g(t) \mbox{ a.e. in }\Omega\right\}
        = \tilde{K}(t),
    \end{aligned}\]
    i.e. $\sigma(t)\in K(t)$ for all $t\in[0,T]$.

    By the first equation of \eqref{proj} with $\dt\defeq\dt_k$, it holds that for all $\varphi\in\Vv$ and $\theta\in C^\infty_0(0,T)$,
    \begin{align*}
        &\int_0^T\left(\blaket{\prd{\hat{v}_{\dt_k}}{t}}{\theta\varphi}
        + \nu(\E(\bar{v}_{\dt_k}), \E(\theta\varphi))_\Hs + (\bar{\sigma}^*_{\dt_k}, \E(\theta\varphi))_\Hs \right)dt\\
        =& \int_0^T\blaket{\bar{f}_{\dt_k}}{\theta\varphi} dt.
    \end{align*}
    By taking $k\rightarrow\infty$, we obtain that for all $\varphi\in\Vv$ and $\theta\in C^\infty_0(0,T)$,
    \begin{align*}
        \int_0^T\left(\blaket{\prd{v}{t}}{\theta\varphi}
        + \nu(\E(v), \E(\theta\varphi))_\Hs + (\sigma, \E(\theta\varphi))_\Hs \right)dt
        = \int_0^T\blaket{f}{\theta\varphi} dt,
    \end{align*}
    which implies the first equation of \eqref{P}. 
    
    By the second and third equations of \eqref{proj} and Proposition \ref{Prop:Phi} (iv), it holds that for all $t \in (t_{n-1},t_n), n=1,2,\ldots,N$ and $\tau \in C([0,T];\Hs)$ with $\tau(t) \in K(t)$,
    \begin{align*}
        &\left(\prd{\hat{\sigma}_{\dt_k}}{t}(t) - \E(\bar{v}_{\dt_k}(t)) - \bar{h}_{\dt_k}(t), 
        \bar{\sigma}_{\dt_k}(t) - \bar{\tau}_{\dt_k}(t)\right)_\Hs\\
        =& \left(\frac{\sigma_n - \sigma_{n-1}}{\dt} - \E(v_n) - h_n, 
        \sigma_n - \tau_n\right)_\Hs
        = \frac{1}{\dt}(\sigma_n-\sigmas{n}, \sigma_n - \tau_n)_\Hs \\
        =& \frac{1}{\dt}((\sigma_n+p_n)-(\sigmas{n}+p_n), 
        (\sigma_n+p_n) - (\tau_n+p_n))_\Hs\le 0,
    \end{align*}
    and hence, it holds that for all $t\in[0,T]$,
    \begin{align*}
        \int_0^t \left(\prd{\hat{\sigma}_{\dt_k}}{t} - \E(\bar{v}_{\dt_k}) - \bar{h}_{\dt_k}, 
        \bar{\sigma}_{\dt_k} - \bar{\tau}_{\dt_k}\right)_\Hs ds
        \le 0.
    \end{align*}
    Since $\bar{\tau}_\dt$ and $\bar{h}_\dt$ converge to $\tau$ and $h$, respectively, strongly in $L^2(\Hs)$, we have that for all $t\in[0,T]$,
    \[\begin{aligned}
        \int_0^t \left(\prd{\hat{\sigma}_{\dt_k}}{t} - \E(\bar{v}_{\dt_k}) - \bar{h}_{\dt_k}, \bar{\tau}_{\dt_k}\right)_\Hs ds
        &\rightarrow \int_0^t \left(\prd{\sigma}{t} - \E(v) - h,\tau\right)_\Hs ds,\\
        \int_0^t \left(\bar{h}_{\dt_k}, 
        \bar{\sigma}_{\dt_k}\right)_\Hs ds
        &\rightarrow \int_0^t (h, \sigma)_\Hs ds
    \end{aligned}\] 
    as $k \rightarrow \infty$. 
    Furthermore, by Theorem \ref{Th:stab}, \eqref{convshwc}, and \eqref{convvhwc}, we obtain that
    \[\begin{aligned}
        &\liminf_{k\rightarrow\infty}
        \int_0^t \left(\prd{\hat{\sigma}_{\dt_k}}{t}, 
        \bar{\sigma}_{\dt_k}\right)_\Hs ds\\
        =&\frac{1}{2}\liminf_{k\rightarrow\infty}
        (\|\hat{\sigma}_{\dt_k}(t)\|_\Hs^2 - \|\sigma_0\|_\Hs^2)
        +\liminf_{k\rightarrow\infty}
        \int_0^t \left(\prd{\hat{\sigma}_{\dt_k}}{t}, 
        \bar{\sigma}_{\dt_k}-\hat{\sigma}_{\dt_k}\right)_\Hs ds\\
        \ge& \frac{1}{2}\|\sigma(t)\|_\Hs^2ds 
        - \frac{1}{2}\|\sigma_0\|_\Hs^2
        = \int_0^t \left(\prd{\sigma}{t}, \sigma\right)_\Hs ds,\\
        &\liminf_{k\rightarrow\infty}
        \left(-\int_0^t \left(\E(\bar{v}_{\dt_k}), 
        \bar{\sigma}_{\dt_k}\right)_\Hs ds\right)\\
        =& \liminf_{k\rightarrow\infty}
        \left(-\int_0^t \left(\E(\bar{v}_{\dt_k}), 
        \bar{\sigma}^*_{\dt_k}\right)_\Hs ds
        + \int_0^t \left(\E(\bar{v}_{\dt_k}), 
        \bar{\sigma}^*_{\dt_k} - \bar{\sigma}_{\dt_k}\right)_\Hs ds\right)
        \\
        =&\liminf_{k\rightarrow\infty}
        \int_0^t\left(\blaket{\prd{\hat{v}_{\dt_k}}{t}}{\bar{v}_{\dt_k}}
        + \nu\|\E(\bar{v}_{\dt_k})\|_\Hs^2
        - \blaket{\bar{f}_{\dt_k}}{\bar{v}_{\dt_k}} \right)ds\\
        %
        %
        \ge& \frac{1}{2}\|v(t)\|_\Hs^2 
        - \frac{1}{2}\|v(0)\|_\Hs^2
        + \int_0^t\left(\nu\|\E(v)\|_\Hs^2
        - \blaket{f}{v} \right)dt
        = -\int_0^t \left(\E(v), \sigma\right)_\Hs ds,
    \end{aligned}\]
    where we have used the first equation of \eqref{P}. 
    Therefore, we obtain that for all $t\in[0,T]$ and $\tau \in C([0,T];\Hs)$ with $\tau(t) \in K(t)$,
    \begin{align*}
        &\int_0^t\left(\prd{\sigma}{t} - \E(v) - h,\sigma - \tau\right)_\Hs ds\\
        \le& \liminf_{k\rightarrow\infty}\int_0^t \left(\prd{\hat{\sigma}_{\dt_k}}{t} - \E(\bar{v}_{\dt_k}) - \bar{h}_{\dt_k}, 
        \bar{\sigma}_{\dt_k} - \bar{\tau}_{\dt_k}\right)_\Hs ds 
        \le 0,
    \end{align*}
    which implies the second inequality of \eqref{P}.

    (Uniqueness) Let $(v_1, \sigma_1)$ and $(v_2, \sigma_2)$ be the solution to Problem \ref{Prob}. If we set $e_v \defeq v_1-v_2$ and $e_\sigma \defeq \sigma_1-\sigma_2$, by the first equation of \eqref{P}, we have for all $\varphi \in\Vv$,
    \[
        \blaket{\prd{e_v}{t}}{\varphi}
        + \nu (\E(e_v),\E(\varphi))_\Hs + (e_\sigma,\E(\varphi))_\Hs
        = 0.
    \]
    Putting $\varphi\defeq e_v$ and integrating over time, we obtain that for all $t\in[0,T]$,
    \begin{align}\label{ineq:uniq1}
        \frac{1}{2}\|e_v(t)\|_\Hv^2
        + \int_0^t\left(\nu \|\E(e_v)\|_\Hs^2 + (e_\sigma,\E(e_v))_\Hs\right)ds
        = 0,
    \end{align}
    where we have used $e_v(0) = v_1(0) - v_2(0) = 0$.
    
    Since $(v_1, \sigma_1)$ is the solution to Problem \ref{Prob}, by putting $\tau\defeq\sigma_2$ in \eqref{P}, we obtain that 
    \begin{align}\label{ineq:uniq2}
        \left(\prd{\sigma_1}{t}-\E(v_1),\sigma_1-\sigma_2\right)_\Hs
        \le \left(h,\sigma_1-\sigma_2\right)_\Hs.
    \end{align}
    On the other hand, since $(v_2, \sigma_2)$ is the solution to Problem \ref{Prob}, by putting $\tau\defeq\sigma_1$ in \eqref{P}, we obtain 
    \begin{align}\label{ineq:uniq3}
        \left(\prd{\sigma_2}{t}-\E(v_2),\sigma_2-\sigma_1\right)_\Hs
        \le \left(h,\sigma_2-\sigma_1\right)_\Hs.
    \end{align}
    Adding \eqref{ineq:uniq2} and \eqref{ineq:uniq3} together, we get
    \begin{align*}
        \left(\prd{e_\sigma}{t}-\E(e_v),e_\sigma\right)_\Hs \le 0.
    \end{align*}
    Integrating for time, we obtain that for all $t\in[0,T]$,
    \begin{align}\label{ineq:uniq4}
        \frac{1}{2}\|e_\sigma(t)\|_\Hs^2
        - \int_0^t\left(\E(e_v),e_\sigma\right)_\Hs ds \le 0,
    \end{align}
    where we have used $e_\sigma(0) = \sigma_1(0) - \sigma_2(0) = 0$. Thus, summing up \eqref{ineq:uniq1} and \eqref{ineq:uniq4}, it holds that for all $t\in[0,T]$, 
    \[
        \frac{1}{2}\|e_v(t)\|_\Hv^2 
        + \frac{1}{2}\|e_\sigma(t)\|_\Hs^2
        + \nu\int_0^t\|\E(e_v)\|_\Hs^2ds \le 0,
    \]
    and hence, $v_1-v_2 = e_v = 0$ and $\sigma_1-\sigma_2 = e_\sigma = 0$ on $[0,T]$.
\end{proof}

\subsection{Strong convergence}\label{sec:conv}

In this subsection, we prove Theorem \ref{Th:strong}. We first state the following lemma:

\begin{Lem}\label{Lem:stab2}
    Under the assumption of Theorem \ref{Th:exact}, if $v_0\in\Vv$ and $f\in L^2(\Hs)$, then there exists a constant $c > 0$ independent of $\dt$ such that
    \[\begin{aligned}
        &\left\|\prd{\hat{v}_\dt}{t}\right\|_{L^2(\Hv)}
        + \|\bar{v}_\dt\|_{L^\infty(\Vv)} 
        + \frac{1}{\dt}\|\bar{v}_\dt - \bar{v}_\dt^*\|_{L^2(\Vv)}\\
        \le~& c(\|v_0\|_\Vv + \|\sigma_0\|_\Hs + \|f\|_{L^2(\Hv)} + \|p\|_{H^1(\Hs)} + \|h\|_{L^2(\Hs)} + \|g\|_{H^1(\Lo)}).
    \end{aligned}\] 
    In particular, $(\|\hat{v}_\dt\|_{H^1(\Hv)})_{0<\dt<1}$ and $(\|\bar{v}_\dt\|_{L^\infty(\Vv)})_{0<\dt<1}$ are bounded.
\end{Lem}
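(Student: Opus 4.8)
The plan is to derive the estimate by testing the first equation of~\eqref{proj} with the backward difference $w_n\defeq(v_n-v_{n-1})/\dt\in\Vv$ and using the structure of the scheme to control the resulting stress contribution. Since $f\in L^2(\Hv)$ one has $f_n\in\Hv$, so $\blaket{f_n}{w_n}=\inner{f_n}{w_n}{\Hv}$, and the test gives
\[
    \|w_n\|_\Hv^2+\nu\,(\E(v_n),\E(w_n))_\Hs+(\sigmas{n},\E(w_n))_\Hs=\inner{f_n}{w_n}{\Hv}.
\]
For the viscosity term I would use $2(a,a-b)=\|a\|^2-\|b\|^2+\|a-b\|^2$, giving after multiplication by $\dt$ the telescoping identity $\dt\nu(\E(v_n),\E(w_n))_\Hs=\tfrac{\nu}{2}(\|\E(v_n)\|_\Hs^2-\|\E(v_{n-1})\|_\Hs^2)+\tfrac{\nu}{2}\dt^2\|\E(w_n)\|_\Hs^2$. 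The delicate term is $(\sigmas{n},\E(w_n))_\Hs$; here I would insert the second equation of~\eqref{proj} in the form $\sigmas{n}=\sigma_{n-1}+\dt(\E(v_n)+h_n)$, so that
\[
    \dt(\sigmas{n},\E(w_n))_\Hs=(\sigma_{n-1},\E(v_n)-\E(v_{n-1}))_\Hs+\dt^2(\E(v_n),\E(w_n))_\Hs+\dt^2(h_n,\E(w_n))_\Hs .
\]
The middle piece telescopes again, leaving a non-negative remainder $\tfrac{\dt}{2}\|\E(v_n)-\E(v_{n-1})\|_\Hs^2$; the last piece is absorbed by Young's inequality, costing only $\tfrac1\nu\dt^2\|h_n\|_\Hs^2\le\tfrac1\nu\dt\|h_n\|_\Hs^2$ since $\dt\le1$; and the first piece I would treat by summation by parts in $n$.

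Next I would multiply by $\dt$, sum over $n=1,\dots,m$, and estimate. The telescoping terms collapse to $\tfrac{\nu}{2}(\|\E(v_m)\|_\Hs^2-\|\E(v_0)\|_\Hs^2)$ plus a lower-order piece of the same shape, and $\|\E(v_0)\|_\Hs\le c_K\|v_0\|_\Vv$ is finite precisely because $v_0\in\Vv$, which is where that hypothesis enters. Summation by parts rewrites $\sum_{n=1}^m(\sigma_{n-1},\E(v_n)-\E(v_{n-1}))_\Hs$ as the boundary term $(\sigma_{m-1},\E(v_m))_\Hs$, the constant $-(\sigma_0,\E(v_0))_\Hs$, and the remainder $-\sum_{n=1}^{m-1}(\sigma_n-\sigma_{n-1},\E(v_n))_\Hs$, the last being bounded, by Cauchy--Schwarz in $n$, by $\bigl(\sum_n\dt^{-1}\|\sigma_n-\sigma_{n-1}\|_\Hs^2\bigr)^{1/2}\bigl(\sum_n\dt\|\E(v_n)\|_\Hs^2\bigr)^{1/2}=\bigl\|\prd{\hat{\sigma}_\dt}{t}\bigr\|_{L^2(\Hs)}\|\E(\bar{v}_\dt)\|_{L^2(\Hs)}$, whose two factors are already controlled by Theorem~\ref{Th:stab}. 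The boundary term $(\sigma_{m-1},\E(v_m))_\Hs$ and the forcing $\sum_n\dt\inner{f_n}{w_n}{\Hv}$ are handled by Young's inequality, absorbing $\tfrac{\nu}{4}\|\E(v_m)\|_\Hs^2$ and $\tfrac12\sum_n\dt\|w_n\|_\Hv^2$ into the left-hand side, while $\|\sigma_{m-1}\|_\Hs\le\|\bar{\sigma}_\dt\|_{L^\infty(\Hs)}$ and $\sum_n\dt\|f_n\|_\Hv^2\le\|f\|_{L^2(\Hv)}^2$ are bounded. No Gronwall argument seems to be required, and one arrives at
\[
    \sum_{n=1}^m\dt\|w_n\|_\Hv^2+\|\E(v_m)\|_\Hs^2+\sum_{n=1}^m\dt^2\|\E(w_n)\|_\Hs^2\le C
    \qquad (m=1,\dots,N,\ \dt\le1),
\]
where $C$ depends only on $\|v_0\|_\Vv$, $\|\sigma_0\|_\Hs$, $\|f\|_{L^2(\Hv)}$ and, through the invoked estimates of Theorem~\ref{Th:stab}, on $\|p\|_{H^1(\Hs)}$, $\|h\|_{L^2(\Hs)}$, $\|g\|_{H^1(\Lo)}$ (using $\|v_0\|_\Hv\le\|v_0\|_\Vv$ and $\|f\|_{L^2(\Vv^*)}\le c\,\|f\|_{L^2(\Hv)}$ so that the right-hand side of Theorem~\ref{Th:stab} is dominated).

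It then remains to read off the three quantities in the statement. The first summand is $\|\prd{\hat{v}_\dt}{t}\|_{L^2(\Hv)}^2$; taking the maximum over $m$ and applying Korn's inequality gives $\|\bar{v}_\dt\|_{L^\infty(\Vv)}^2\le c_K^2\max_m\|\E(v_m)\|_\Hs^2$; and since $\|v_n-v_{n-1}\|_\Vv\le c_K\|\E(v_n)-\E(v_{n-1})\|_\Hs=c_K\dt\|\E(w_n)\|_\Hs$, the third summand yields $\dt^{-1}\|\bar{v}_\dt-\bar{v}_\dt^*\|_{L^2(\Vv)}^2=\sum_{n=1}^N\|v_n-v_{n-1}\|_\Vv^2\le c_K^2\sum_{n=1}^N\dt^2\|\E(w_n)\|_\Hs^2$. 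I expect the only genuine obstacle to be the term $(\sigmas{n},\E(w_n))_\Hs$: it cannot be controlled by the momentum energy alone and has to be rewritten through the second equation of~\eqref{proj}; after that it is essential that only the $L^2$-in-time bound on $\prd{\hat{\sigma}_\dt}{t}$ from Theorem~\ref{Th:stab}(ii) is used (a pointwise-in-time bound on the stress rate being unavailable), and that $f$ and $h$ are never differenced in time, which is exactly why one tests the momentum equation with $w_n$ instead of differencing it.
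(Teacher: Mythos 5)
Your proposal is correct and follows essentially the same route as the paper's proof: test the momentum equation with the velocity increment, substitute $\sigmas{n}=\sigma_{n-1}+\dt(\E(v_n)+h_n)$, handle the coupling term $(\sigma_{n-1},\E(v_n)-\E(v_{n-1}))_\Hs$ by discrete integration by parts (the paper's pointwise telescoping identity with correction $(\sigma_n-\sigma_{n-1},\E(v_n))_\Hs$ is exactly your Abel summation), and control the correction, the boundary term, and $\|\E(\bar{v}_\dt)\|_{L^2(\Hs)}$ via Theorem \ref{Th:stab}(i)--(ii), with $v_0\in\Vv$ and $f\in L^2(\Hv)$ entering exactly where you say. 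The only difference is cosmetic: you bound the correction by Cauchy--Schwarz in $n$ whereas the paper uses a pointwise Young inequality with $\dt$-weights, which yields the same quantities.
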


\begin{proof}
    Putting $\varphi \defeq v_n - v_{n-1}$ in the first equation of \eqref{proj}, we obtain 
    \begin{align*}
        &\frac{\dt}{2}\|\frac{v_n-v_{n-1}}{2}\|_\Hv^2
        + \nu (\E(v_n),\E(v_n-v_{n-1}))_\Hs 
        + (\sigmas{n},\E(v_n-v_{n-1}))_\Hs\\
        =& (f_n, v_n-v_{n-1})_\Hv.
    \end{align*}
    Applying the second equation of \eqref{proj} yields
    \begin{align*}
        (\sigmas{n},\E(v_n-v_{n-1}))_\Hs
        =& \dt(\E(v_n), \E(v_n-v_{n-1}))_\Hs
        + (\sigma_n, \E(v_n))_\Hs - (\sigma_{n-1}, \E(v_{n-1}))_\Hs\\
        & - (\sigma_n - \sigma_{n-1}, \E(v_n))_\Hs + \dt(h_n, \E(v_n-v_{n-1}))_\Hs,
    \end{align*}
    and hence,
    \begin{align*}
        &\frac{\nu+\dt}{2}(\|\E(v_n)\|_\Hs^2 - \|\E(v_{n-1})\|_\Hs^2 + \|\E(v_n-v_{n-1})\|_\Hs^2)\\
        &+ (\sigma_n, \E(v_n))_\Hs - (\sigma_{n-1}, \E(v_{n-1}))_\Hs + \dt\left\|\frac{v_n-v_{n-1}}{\dt}\right\|_\Hv^2\\
        =& (f_n, v_n-v_{n-1})_\Hv 
        + (\sigma_n - \sigma_{n-1}, \E(v_n))_\Hs 
        - \dt(h_n, \E(v_n-v_{n-1}))_\Hs\\
        %
        %
        \le& \frac{\dt}{2}\|f_n\|_{\Hv}^2
        + \frac{\dt}{2}\left\|\frac{v_n-v_{n-1}}{\dt}\right\|_\Hv^2
        + \frac{\dt}{2}\left\|\frac{\sigma_n - \sigma_{n-1}}{\dt}\right\|_\Hs^2
        + \frac{\dt}{2}\|\E(v_n)\|_\Hs^2\\
        &+ \frac{\dt}{2}\|h_n\|_\Hs^2
        + \frac{\dt}{2}\|\E(v_n-v_{n-1})\|_\Hs^2,
    \end{align*}
    which implies that 
    \begin{align*}
        &\nu(\|\E(v_n)\|_\Hs^2 - \|\E(v_{n-1})\|_\Hs^2 + \|\E(v_n-v_{n-1})\|_\Hs^2)\\
        &+ 2(\sigma_n, \E(v_n))_\Hs - 2(\sigma_{n-1}, \E(v_{n-1}))_\Hs + \dt\left\|\frac{v_n-v_{n-1}}{\dt}\right\|_\Hv^2\\
        \le& \dt\|\E(v_{n-1})\|_\Hs^2
        + \dt\|f_n\|_{\Hv}^2
        + \dt\left\|\frac{\sigma_n - \sigma_{n-1}}{\dt}\right\|_\Hs^2
        + \dt\|h_n\|_\Hs^2.
    \end{align*}
    By summing up for $n = 1,2,\ldots,m$, where $m\le N$, it holds that
    \begin{align*}
        & \nu\|\E(v_m)\|_\Hs^2 + 2(\sigma_m, \E(v_m))_\Hs\\
        &+ \dt\sum_{n=1}^m\left(
            \frac{\nu}{\dt}\|\E(v_n-v_{n-1})\|_\Hs^2 + \left\|\frac{v_n-v_{n-1}}{\dt}\right\|_\Hv^2\right)\\
        \le& \nu\|\E(v_0)\|_\Vv^2 + 2(\sigma_0, \E(v_0))_\Hs\\
        &+ \dt\sum_{n=1}^m\left(
            \|\E(v_{n-1})\|_\Hs^2 + \|f_n\|_{\Hv}^2
            + \left\|\frac{\sigma_n - \sigma_{n-1}}{\dt}\right\|_\Hs^2
            + \|h_n\|_\Hs^2\right),
    \end{align*}
    which implies that 
    \begin{align*}
        &\nu\|\E(\bar{v}_\dt(t))\|_\Hs^2 
        + 2(\bar{\sigma}_\dt(t), \E(\bar{v}_\dt(t)))_\Hs\\
        &+ \int_0^t\left(
            \frac{\nu}{\dt}\|\E(\hat{v}_\dt(s)-\bar{v}_\dt(s))\|_\Hs^2 + \left\|\prd{\hat{v}_\dt}{t}\right\|_\Hs^2\right)ds\\
        \le& (\nu+\dt)\|\E(v_0)\|_\Hs^2 + 2(\sigma_0, \E(v_0))_\Hs\\
        &+ \|\E(\bar{v}_\dt)\|_{L^2(\Hs)}^2 
        + \|\bar{f}_\dt\|_{L^2(\Hv)}^2
        + \left\|\hat{\sigma}_{\dt}'\right\|_{L^2(\Hs)}^2
        + \|\bar{h}_{\dt}\|_{L^2(\Hs)}^2,
    \end{align*}
    for all $t\in[0,T]$. Since we have that
    \begin{align*}
        |2(\bar{\sigma}_\dt(t), \E(\bar{v}_\dt(t)))|
        &
        \le \frac{\nu}{2}\|\E(\bar{v}_\dt(t))\|_\Hs^2
        + \frac{2}{\nu}\|\bar{\sigma}_\dt\|_{L^\infty(\Hs)}^2,\\
        |2(\sigma_0, \E(v_0))|
        &\le \|\sigma_0\|_\Hs^2 + \|\E(v_0)\|_\Hs^2,
    \end{align*}
    we obtain 
    \begin{align*}
        &\frac{\nu}{2}\|\E(\bar{v}_\dt(t))\|_\Hs^2 
        + \int_0^t\left(
            \frac{\nu}{\dt}\|\E(\hat{v}_\dt(s)-\bar{v}_\dt(s))\|_\Hs^2 + \left\|\prd{\hat{v}_\dt}{t}\right\|_\Hs^2
            \right)ds\\
        \le& (\nu+\dt+1)\|\E(v_0)\|_\Hs^2
        + \|\sigma_0\|_\Hs^2
        + \frac{2}{\nu}\|\bar{\sigma}_\dt\|_{L^\infty(\Hs)}^2
        + \|\E(\bar{v}_\dt)\|_{L^2(\Hs)}^2 \\
        &+ \|\bar{f}_\dt\|_{L^2(\Hv)}^2 
        + \left\|\prd{\hat{\sigma}_{\dt}}{t}\right\|_{L^2(\Hs)}^2
        + \|\bar{h}_{\dt}\|_{L^2(\Hs)}^2.
    \end{align*}
    Therefore, applying Theorem \ref{Th:stab} and the Korn inequality leads us to the conclusion.
\end{proof}

\begin{proof}[Proof of Theorem \ref{Th:strong}]
    From the proof of Theorem \ref{Th:exact}, it holds that for all $\varphi\in\Vv$ and $\tau \in C([0,T];\Hs)$ with $\tau(t) \in K(t)$ for all $t\in[0,T]$,
    \begin{align}\label{approaxP}
        \left\{\begin{aligned}
            &\left(\prd{\hat{v}_\dt}{t},\varphi\right)_\Hv
            + \nu(\E(\bar{v}_\dt), \E(\varphi))_\Hs 
            + (\bar{\sigma}_\dt, \E(\varphi))_\Hs
            = \left(\bar{f}_\dt,\varphi\right)_\Hv,\\
            &\left(\prd{\hat{\sigma}_\dt}{t} - \E(\bar{v}_\dt), 
            \bar{\sigma}_\dt - \tau_\dt\right)_\Hs
            \le \left(\bar{h}_\dt, 
            \bar{\sigma}_\dt - \tau_\dt\right)_\Hs,
        \end{aligned}\right.
    \end{align}
    a.e. on $(0,T)$, where $\tau_\dt(t)\defeq\tau(t_n)$ for $t\in(t_{n-1},t_n]$ and $n=1,2,\ldots,N$.
    We define the error terms as $\hat{e}_\dt \defeq \hat{v}_\dt - v$, $\bar{e}_\dt \defeq \bar{v}_\dt - v$, $\bar{\eps}_\dt \defeq \bar{\sigma}_\dt - \sigma$, $\bar{\eps}^*_\dt \defeq \bar{\sigma}^*_\dt - \sigma$, and $\hat{\eps}_\dt \defeq \hat{\sigma}_\dt - \sigma$. Using the first equations of \eqref{P} and \eqref{approaxP}, we get for all $\varphi\in\Vv$,
    \[
        \left(\prd{\hat{e}_\dt}{t},\varphi\right)_\Hv
        + \nu(\E(\bar{e}_\dt), \E(\varphi))_\Hs 
        + (\bar{\eps}^*_\dt, \E(\varphi))_\Hs
        = (\bar{f}_\dt-f,\varphi)_\Hv
    \]
    a.e. on $(0,T)$. Putting $\varphi\defeq\bar{e}_\dt$, we obtain
    \begin{align*}
        \left(\prd{\hat{e}_\dt}{t},\bar{e}_\dt\right)_\Hv
        + \nu\|\E(\bar{e}_\dt)\|_\Hs^2
        + (\bar{\eps}^*_\dt, \E(\bar{e}_\dt))_\Hs
        = (\bar{f}_\dt-f,\bar{e}_\dt)_\Hv
    \end{align*}
    a.e. on $(0,T)$, which implies that 
    \begin{align}\label{ineq:err1}\begin{aligned}
        &\left(\prd{\hat{e}_\dt}{t},\hat{e}_\dt\right)_\Hv
        + \nu\|\E(\bar{e}_\dt)\|_\Hs^2
        + (\bar{\eps}^*_\dt, \E(\bar{e}_\dt))_\Hs\\
        \le& \|\bar{f}_\dt-f\|_\Hv\|\bar{e}_\dt\|_\Hv
        + \left(\prd{\hat{e}_\dt}{t},\hat{v}_\dt - \bar{v}_\dt\right)_\Hv.
    \end{aligned}\end{align}
    Applying $\tau\defeq\sigma_\dt$ in the second inequality of \eqref{approaxP}, where $\sigma_\dt(t)\defeq\sigma(t_n)$ for $t\in(t_{n-1},t_n]$ and $n=1,2,\ldots,N$, we have that
    \begin{align*}
        \left(\prd{\hat{\sigma}_\dt}{t} - \E(\bar{v}_\dt)_\Hs, 
        \bar{\sigma}_\dt - \sigma_\dt\right)_\Hs
        \le \left(\bar{h}_\dt, \bar{\sigma}_\dt - \sigma_\dt\right)_\Hs
    \end{align*}
    a.e. on $(0,T)$, which implies that 
    \begin{align}\label{ineq:err2-1}\begin{aligned}
        &\quad\left(\prd{\hat{\sigma}_\dt}{t}, \hat{\eps}_\dt\right)_\Hs
        - \left(\E(\bar{v}_\dt), \bar{\eps}^*_\dt\right)_\Hs\\
        %
        %
        &\le \left(\bar{h}_\dt - h, \bar{\sigma}_\dt - \sigma_\dt\right)_\Hs
        + \left(h, \bar{\sigma}_\dt - \sigma_\dt\right)_\Hs
        + \left(\prd{\hat{\eps}_\dt}{t}, \hat{\sigma}_\dt - \bar{\sigma}_\dt\right)_\Hs\\
        &\quad+ \left(\prd{\sigma}{t}, 
        \hat{\sigma}_\dt - \bar{\sigma}_\dt\right)_\Hs
        - \left(\E(\bar{e}_\dt), 
        \bar{\sigma}^*_\dt - \bar{\sigma}_\dt\right)_\Hs
        - \left(\E(v), \bar{\sigma}^*_\dt - \bar{\sigma}_\dt\right)_\Hs\\
        &\quad- \left(\prd{\hat{\sigma}_\dt}{t} - \E(\bar{v}_\dt), 
        \sigma - \sigma_\dt\right)_\Hs.
    \end{aligned}\end{align}
    Similarly, using $\tau\defeq\Pop{g}(\bar{\sigma}_\dt+p)-p$ in the second inequality of \eqref{P}, we find
    \begin{align*}
        \left(\frac{d\sigma}{dt}-\E(v),\sigma+p
        - \Pop{g}(\bar{\sigma}_\dt+p)\right)_\Hs
        \le \left(h,\sigma+p-\Pop{g}(\bar{\sigma}_\dt+p)\right)_\Hs
    \end{align*}
    a.e. on $(0,T)$, which implies that
    \begin{align}\label{ineq:err2-2}\begin{aligned}
        &\quad-\left(\prd{\sigma}{t}, 
        \hat{\eps}_\dt\right)_\Hs
        + \left(\E(v), \bar{\eps}^*_\dt\right)_\Hs
        =\left(\prd{\sigma}{t}, 
        \sigma - \hat{\sigma}_\dt\right)_\Hs
        - \left(\E(v), 
        \bar{\sigma}^*_\dt - \sigma\right)_\Hs\\
        &\le \left(h,
        \sigma+p-\Pop{g}(\bar{\sigma}_\dt+p)\right)_\Hs
        + \left(\prd{\sigma}{t}, 
        \Pop{g}(\bar{\sigma}_\dt+p) 
        - (\hat{\sigma}_\dt+p)\right)_\Hs\\
        &\quad- \left(\E(v), 
        \Pop{g}(\bar{\sigma}_\dt+p) 
        - (\bar{\sigma}^*_\dt+p)\right)_\Hs.
    \end{aligned}\end{align}
    Adding \eqref{ineq:err2-1} and \eqref{ineq:err2-2}, we deduce
    \begin{align}\label{ineq:err2}\begin{aligned}
        &\quad\left(\prd{\hat{\eps}_\dt}{t}, \hat{\eps}_\dt\right)_\Hs
        - \left(\E(\bar{e}_\dt), \bar{\eps}^*_\dt\right)_\Hs\\
        &\le  \left(\bar{h}_\dt - h, \bar{\sigma}_\dt - \sigma_\dt\right)_\Hs
        + \left(h, 
        ((\bar{\sigma}_\dt + p) 
        - \Pop{g}(\bar{\sigma}_\dt+p))
        - (\sigma_\dt - \sigma)\right)_\Hs\\
        &\quad+ \left(\prd{\hat{\eps}_\dt}{t}, 
        \hat{\sigma}_\dt - \bar{\sigma}_\dt\right)_\Hs
        + \left(\prd{\sigma}{t}, 
        \Pop{g}(\bar{\sigma}_\dt+p) 
        - (\bar{\sigma}_\dt+p)\right)_\Hs
        - \left(\E(\bar{e}_\dt), 
        \bar{\sigma}^*_\dt - \bar{\sigma}_\dt\right)_\Hs\\
        &\quad- \left(\E(v), 
        \Pop{g}(\bar{\sigma}_\dt+p) - (\bar{\sigma}_\dt+p)\right)_\Hs
        - \left(\prd{\hat{\sigma}_\dt}{t} - \E(\bar{v}_\dt), 
        \sigma - \sigma_\dt\right)_\Hs\\
        &\le \|\bar{h}_\dt - h\|_\Hs
        \|\bar{\sigma}_\dt - \sigma_\dt\|_\Hs
        + \left\|\prd{\hat{\eps}_\dt}{t}\right\|_\Hs
        \|\hat{\sigma}_\dt - \bar{\sigma}_\dt\|_\Hs\\
        &\quad+ \|\E(\bar{e}_\dt)\|_\Hs
        \|\bar{\sigma}^*_\dt - \bar{\sigma}_\dt\|_\Hs
        + \left(\|h\|_\Hs
        + \left\|\prd{\sigma}{t}\right\|_\Hs + \|\E(v)\|_\Hs\right) 
        \|g - g_\dt\|_\Lo\\
        &\quad+ \left(\|h\|_\Hs
        + \left\|\prd{\hat{\sigma}_\dt}{t} - \E(\bar{v}_\dt)\right\|_\Hs\right) \|\sigma_\dt - \sigma\|_\Hs
    \end{aligned}\end{align}
    a.e. on $(0,T)$, where we have used $\bar{\sigma}_\dt+p = \Pop{g_\dt}(\bar{\sigma}_\dt+p)$ and Proposition \ref{Prop:Phi} (ii).
    Additionally, by adding \eqref{ineq:err1} and \eqref{ineq:err2}, we get
    \[\begin{aligned}
        &\quad \left(\prd{\hat{e}_\dt}{t},\hat{e}_\dt\right)_\Hv
        + \nu\|\E(\bar{e}_\dt)\|_\Hs^2
        + \left(\prd{\hat{\eps}_\dt}{t}, 
        \hat{\eps}_\dt\right)_\Hs\\
        &\le \|\bar{f}_\dt-f\|_{\Hv}\|\bar{e}_\dt\|_\Hv
        + \left(\prd{\hat{e}_\dt}{t},\hat{v}_\dt - \bar{v}_\dt\right)_\Hv\\
        &\quad+  \|\bar{h}_\dt - h\|_\Hs
        \|\bar{\sigma}_\dt - \sigma_\dt\|_\Hs
        + \left\|\prd{\hat{\eps}_\dt}{t}\right\|_\Hs
        \|\hat{\sigma}_\dt - \bar{\sigma}_\dt\|_\Hs\\
        &\quad+ \|\E(\bar{e}_\dt)\|_\Hs
        \|\bar{\sigma}^*_\dt - \bar{\sigma}_\dt\|_\Hs
        + \left(\|h\|_\Hs
        + \left\|\prd{\sigma}{t}\right\|_\Hs + \|\E(v)\|_\Hs\right) 
        \|g - g_\dt\|_\Lo\\
        &\quad+ \left(\|h\|_\Hs
        + \left\|\prd{\hat{\sigma}_\dt}{t} - \E(\bar{v}_\dt)\right\|_\Hs\right) \|\sigma_\dt - \sigma\|_\Hs
    \end{aligned}\]
    a.e. on $(0,T)$. 
    Hence, by integrating it, we have that for all $t\in[0,T]$,
    \[\begin{aligned}
        &\quad \|\hat{e}_\dt(t)\|_\Hv^2
        + \nu\|\E(\bar{e}_\dt)\|_{L^2(0,t;\Hs)}^2
        + \|\hat{\eps}_\dt(t)\|_\Hs^2\\
        &\le \|\bar{f}_\dt-f\|_{L^2(\Hv)}\|\bar{e}_\dt\|_{L^2(\Hv)}
        + \left\|\prd{\hat{e}_\dt}{t}\right\|_{L^2(\Hv)}
        \|\hat{v}_\dt - \bar{v}_\dt\|_{L^2(\Hv)}\\
        &\quad+  \|\bar{h}_\dt - h\|_{L^2(\Hs)}
        \|\bar{\sigma}_\dt - \sigma_\dt\|_{L^2(\Hs)}
        + \left\|\prd{\hat{\eps}_\dt}{t}\right\|_{L^2(\Hs)}
        \|\hat{\sigma}_\dt - \bar{\sigma}_\dt\|_{L^2(\Hs)}\\
        &\quad+ \|\E(\bar{e}_\dt)\|_{L^2(\Hs)}
        \|\bar{\sigma}^*_\dt - \bar{\sigma}_\dt\|_{L^2(\Hs)}\\
        &\quad+ \left(\|h\|_{L^2(\Hs)}
        + \left\|\prd{\sigma}{t}\right\|_{L^2(\Hs)} + \|\E(v)\|_{L^2(\Hs)}\right) 
        \|g - g_\dt\|_{L^2(\Lo)}\\
        &\quad+ \left(\|h\|_{L^2(\Hs)}
        + \left\|\prd{\hat{\sigma}_\dt}{t} - \E(\bar{v}_\dt)\right\|_{L^2(\Hs)}\right) \|\sigma_\dt - \sigma\|_{L^2(\Hs)}.
    \end{aligned}\]
    By Theorem \ref{Th:stab}, the Korn inequality, and Lemma \ref{Lem:stab2}, there exists a constant $c>0$ such that for all $0<\dt<1$,
    \[\begin{aligned}
        &\quad \|\hat{e}_\dt\|_{L^\infty(\Hv)}^2
        + \|\bar{e}_\dt\|_{L^2(\Vv)}^2
        + \|\hat{\eps}_\dt(t)\|_{L^\infty(\Hs)}^2\\
        &\le c(\|\bar{f}_\dt-f\|_{L^2(\Hv)}
        + \|\hat{v}_\dt - \bar{v}_\dt\|_{L^2(\Hv)}
        +  \|\bar{h}_\dt - h\|_{L^2(\Hs)}
        + \|\hat{\sigma}_\dt - \bar{\sigma}_\dt\|_{L^2(\Hs)}\\
        &\quad
        + \|\bar{\sigma}^*_\dt - \bar{\sigma}_\dt\|_{L^2(\Hs)}
        + \|g - g_\dt\|_{L^2(\Lo)}
        + \|\sigma_\dt - \sigma\|_{L^2(\Hs)}),
    \end{aligned}\]
    where we have used that 
    $
        \left\|\prd{\hat{e}_\dt}{t}\right\|_{L^2(\Hs)}
        \le \left\|\prd{\hat{v}_\dt}{t}\right\|_{L^2(\Hs)}
        + \left\|\prd{v}{t}\right\|_{L^2(\Hs)}
    $
    and\\ $(\left\|\prd{\hat{e}_\dt}{t}\right\|_{L^2(\Hs)})_{0<\dt<1}$ is bounded.
    Since 
    \begin{align*}
        \bar{f}_\dt &\rightarrow f\quad\mbox{strongly in }L^2(\Hv),\\
        \bar{h}_\dt &\rightarrow h\quad\mbox{strongly in }L^2(\Hs),\\
        g_\dt &\rightarrow g\quad\mbox{strongly in }L^2(\Lo),\\
        \sigma_\dt &\rightarrow \sigma\quad\mbox{strongly in }L^2(\Hs),\\
        \hat{v}_\dt - \bar{v}_\dt &\rightarrow 0\quad\mbox{strongly in }L^2(\Hv),\\
        \hat{\sigma}_\dt - \bar{\sigma}_\dt &\rightarrow 0\quad\mbox{strongly in }L^2(\Hs),\\
        \bar{\sigma}^*_\dt - \bar{\sigma}_\dt &\rightarrow 0\quad\mbox{strongly in }L^2(\Hs),
    \end{align*}
    as $\dt\rightarrow 0$, we obtain the conclusion.
\end{proof}

\section{Conclusion}\label{sec:Conclusion}

For Problem \ref{Prob} of a perfect plasticity model with a time-dependent yield surface, we proposed a new numerical scheme \eqref{proj} and proved its stability in Theorem \ref{Th:stab}. Establishing the stability, without the need for continuity and a positive lower bound of the threshold function, allowed us to demonstrate the existence of an exact solution under weaker assumptions than those in Theorem \ref{Th:AFK}, namely without assumptions (A1), (A2), and (A3) (Theorem \ref{Th:exact}). Moreover, Theorem \ref{Th:strong} proved that solutions obtained through this scheme strongly converge to the exact solutions under the specified norms.

In this paper, we exclusively addressed the case where $K$ represents the von Mises model. For future work, it is necessary to investigate whether the proposed scheme can be adapted for cases where $K$ is a general convex set or applied to models such as the Drucker--Prager model \cite{dSNPO08}. While this paper focused on time discretization, exploring fully discretized cases, which involve both time and spatial discretization, is an important next step in numerical computations. Specifically, when applying the finite element method, selecting the appropriate finite element spaces for $v$ and $\sigma$ becomes a pivotal step. Additionally, conducting numerical calculations and comparing them with existing methods and experimental results is essential. Addressing convergence in numerical analysis is crucial. This first requires discussing the regularity of the exact solution, which is currently an open problem.

\bibliographystyle{spmpsci}      
\bibliography{Proj4Plasticity.bib}

\appendix
\section{Explicit Representation of $K(t)$}\label{sec:K}

\setcounter{equation}{0}
\renewcommand{\theequation}{A.\arabic{equation}}
\setcounter{figure}{0}
\renewcommand{\thefigure}{\Alph{section}.\arabic{figure}}
\setcounter{table}{0}
\renewcommand{\thetable}{\Alph{section}.\arabic{table}}

For the case of $d=3$, it is well known in engineering that the von Mises yield surface becomes a super-cylinder. In this appendix, we discuss the general case for $2\le d\in\N$.

We introduce the characterization of the closed set \(K_R \subset \mathbb{R}^{d \times d}\) defined by \(K_R \defeq \{\sigma \in \mathbb{R}^{d \times d} : |\sigma^D| \le R\}\). To achieve this, we define linear, distance-preserving maps \(\Psi_1\) and \(\Psi_2\) that transform \(\mathbb{R}\) and \(\mathbb{R}^{d \times d-1}\) into appropriate subspaces. These maps allow us to decompose elements in \(K_R\) and show the uniqueness of this decomposition. Finally, we prove that for all \(F \in \mathbb{R}^{d \times d}\), the mapping \(K_R\ni\sigma \mapsto |\sigma - F|^2\in\R\) achieves its minimum on \(K_R\) at a specific point, facilitating further analysis of yield conditions. Lastly, we show that \eqref{ineq:2ndP} can be explicitly calculated using \(\Pop{g_n}\) in the form of the second equation of \eqref{implicit}.

First of all, we prepare Assumption \ref{Assump:Psi}.

\begin{Assump}\label{Assump:Psi}
  Maps $\Psi_1:\R\rightarrow\R^{d\times d}$ and $\Psi_2:\R^{d\times d-1}\rightarrow\R^{d\times d}$ satisfy the following conditions.
  \begin{enumerate}
      \item[(i)] $\Psi_1$ and $\Psi_2$ are linear.
      \item[(ii)] $\Psi_1$ and $\Psi_2$ are distance preserving maps, i.e., $\Psi_1$ and $\Psi_2$ satisfy that
      \[
          |\Psi_1(\lambda)|=|\lambda|,\quad 
          |\Psi_2(x)|=|x|
      \]
      for all $\lambda\in\R$ and $x\in\R^{d\times d-1}$.
      \item[(iii)] $\Psi_1(\R) = E_d\R (=\{\mu E_d:\mu\in\R\})$ and 
      $\Psi_2(\R^{d\times d-1}) = \{A\in\R^{d\times d} : \tr A = 0\}$. 
  \end{enumerate}
\end{Assump}

For example, if we define $\Psi_1:\R\rightarrow\R^{d\times d}$ and $\Psi_2:\R^{d\times d-1}\rightarrow\R^{d\times d}$ as follows; for $\lambda\in\R$ and $x=((a_i)_{i=1,\ldots,d-1}, (b_{ij})_{i,j=1,\ldots,d\mbox{ with }i\ne j})\in\R^{d\times d}$,
\[
  \Psi_1(\lambda) = \frac{\lambda}{\sqrt{d}}E_d,\quad 
  \Psi_2(x) = \sum_{i=1}^{d-1} a_i e_i + \sum_{i \ne j}b_{ij}E_{ij},
\]
where $e_i \in \R^{d\times d}$ ($i=1,\ldots,d-1$) is defined by
\begin{align*}
  e_i \defeq \frac{1}{\sqrt{i(i+1)}}\diag(\underbrace{1, \ldots, 1}_{i},-i,0,\ldots,0),
\end{align*}
and $E_{ij}$ ($i, j = 1, \ldots, d$) is defined as the \(d \times d\) matrix where the \((i,j)\)-entry is 1 and all other entries are 0, then $\Psi_1$ and $\Psi_2$ satisfy Assumption \ref{Assump:Psi}. 

\begin{Prop}\label{prop:KR}
  We assume that the maps $\Psi_1:\R\rightarrow\R^{d\times d}$ and $\Psi_2:\R^{d\times d-1}\rightarrow\R^{d\times d}$ satisfy Assumption \ref{Assump:Psi}.
  Let $R>0$ and let $K_R \defeq \{\sigma\in\R^{d \times d}:|\sigma^D| \le R\}$. Then we have that 
  \begin{align}\label{eq:KR}
      K_R = \{\Psi_1(\lambda) + \Psi_2(x) : \lambda\in\R, x\in\R^{d\times d-1}, |x|\le R\}.
  \end{align}
  Furthermore, for all $\sigma\in K_R$, there exist unique $\lambda\in\R$ and $x\in\R^{d\times d-1}$ such that $\sigma = \Psi_1(\lambda) + \Psi_2(x)$.
\end{Prop}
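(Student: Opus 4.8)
The plan is to reduce the statement to the orthogonal splitting of $\R^{d\times d}$ into the line $E_d\R$ of spherical tensors and the hyperplane $W\defeq\{A\in\R^{d\times d}:\tr A=0\}$ of deviatoric tensors, and then to use that $\Psi_1$ and $\Psi_2$ are isometric linear parametrizations of these two complementary subspaces.

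First I would check that $\Psi_1$ and $\Psi_2$ are injective: if $\Psi_1(\lambda)=0$ then $|\lambda|=|\Psi_1(\lambda)|=0$ by Assumption \ref{Assump:Psi}(ii), hence $\lambda=0$, and likewise $\Psi_2(x)=0$ forces $x=0$. Together with Assumption \ref{Assump:Psi}(iii) this shows that $\Psi_1$ is a linear bijection of $\R$ onto $E_d\R$ and $\Psi_2$ is a linear bijection of $\R^{d\times d-1}$ onto $W$. Next I would record that $E_d\R\cap W=\{0\}$, since $\tr(\mu E_d)=\mu d$ vanishes only for $\mu=0$, and that every $\sigma\in\R^{d\times d}$ splits as $\sigma=\frac{\tr\sigma}{d}E_d+\sigma^D$ with $\frac{\tr\sigma}{d}E_d\in E_d\R$ and $\sigma^D\in W$; thus $\R^{d\times d}=E_d\R\oplus W$.

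For the decomposition itself, given $\sigma\in\R^{d\times d}$ I would pick the unique $\lambda\in\R$ with $\Psi_1(\lambda)=\frac{\tr\sigma}{d}E_d$ and the unique $x\in\R^{d\times d-1}$ with $\Psi_2(x)=\sigma^D$ (both exist by the bijectivity just established), so that $\sigma=\Psi_1(\lambda)+\Psi_2(x)$. Uniqueness follows because $\Psi_1(\lambda_1)+\Psi_2(x_1)=\Psi_1(\lambda_2)+\Psi_2(x_2)$ gives $\Psi_1(\lambda_1-\lambda_2)=\Psi_2(x_2-x_1)\in E_d\R\cap W=\{0\}$, whence $\lambda_1=\lambda_2$ and $x_1=x_2$ by injectivity. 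Finally, for the identity \eqref{eq:KR} I would note that in this decomposition $\sigma^D=\Psi_2(x)$, so Assumption \ref{Assump:Psi}(ii) gives $|\sigma^D|=|\Psi_2(x)|=|x|$; hence $\sigma\in K_R$, i.e. $|\sigma^D|\le R$, holds if and only if $|x|\le R$, which is exactly \eqref{eq:KR}.

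I do not anticipate a genuine obstacle here; the argument is elementary linear algebra. The only point requiring a little care is that the ranges of $\Psi_1$ and $\Psi_2$ are complementary — equivalently, that $E_d$ is not deviatoric — so that the spherical and deviatoric parts of $\sigma$ can be matched independently and the decomposition is genuinely unique; the remaining manipulations are routine bookkeeping with the isometry identities $|\Psi_i(\cdot)|=|\cdot|$.
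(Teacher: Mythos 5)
Your argument is correct and follows essentially the same route as the paper's: both rest on the orthogonal spherical/deviatoric splitting $\R^{d\times d}=E_d\R\oplus\{A:\tr A=0\}$ together with the fact that $\Psi_1,\Psi_2$ are isometric linear bijections onto these summands, so that the constraint $|\sigma^D|\le R$ translates verbatim into $|x|\le R$. If anything, your write-up is slightly more thorough: you derive injectivity of $\Psi_1,\Psi_2$ from the isometry property and give an explicit argument for the uniqueness of $(\lambda,x)$, whereas the paper obtains the ambient direct sum via the idempotent projector $\Pop{0}$, asserts $K_R=(\operatorname{Im}(\Pop{0})\cap K_R)\oplus(\operatorname{Ker}(\Pop{0})\cap K_R)$ without comment, and leaves the uniqueness clause implicit.
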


\begin{proof}
  First, we show that 
  \begin{align}\label{eq:decomp}
      R^{d\times d} = \Psi_1(\R) \oplus \Psi_2(\R^{d\times d-1})
      = \{\Psi_1(\lambda) + \Psi_2(x) : \lambda\in\R, x\in\R^{d\times d-1}\}.
  \end{align}
  The map $\Pop{0}:\R^{d\times d}\ni A\mapsto (\tr A/d)E_d\in\R^{d\times d}$ is a linear map and satisfies that $\Pop{0}^2 = \Pop{0}$. Hence, it holds that 
  \[
      \R^{d\times d} = \operatorname{Im}(\Pop{0}) \oplus \operatorname{Ker}(\Pop{0}),
  \]
  where $\operatorname{Im}(\Pop{0})$ and $\operatorname{Ker}(\Pop{0})$ is the image and the kernel of $\Pop{0}$, respectively. Since it holds that 
  \[
      \operatorname{Im}(\Pop{0}) = E_d\R = \Psi_1(\R),\quad
      \operatorname{Ker}(\Pop{0}) = \{A\in\R^{d\times d}:\tr A=0\} = \Psi_2(\R^{d\times d-1}),
  \]
  we obtain \eqref{eq:decomp}.

  We have that 
  \[
      K_R = (\operatorname{Im}(\Pop{0}) \cap K_R) \oplus (\operatorname{Ker}(\Pop{0}) \cap K_R),
  \]
  and, by Assumption \ref{Assump:Psi} (ii) and (iii),
  \begin{align*}
      (\operatorname{Im}(\Pop{0}) \cap K_R) 
      &= E_d\R = \Psi_1(\R),\\
      (\operatorname{Ker}(\Pop{0}) \cap K_R)
      &= \{A\in\R^{d\times d}:\tr A=0, |A^D|\le R\}
      = \{A\in\R^{d\times d}:\tr A=0, |A|\le R\}\\
      &= \{\Psi_2(x):x\in\R^{d\times d-1}, |x|<R\},
  \end{align*}
  which implies that
  \begin{align*}
      K_R 
      &= \Psi_1(\R) \oplus \{\Psi_2(x):x\in\R^{d\times d-1}, |x|<R\}\\
      &= \{\Psi_1(\lambda) + \Psi_2(x) : \lambda\in\R, x\in\R^{d\times d-1}, |x|\le R\}.
  \end{align*}
\end{proof}

\begin{Lem}\label{Lem:min}
  We assume that maps $\Psi_1:\R\rightarrow\R^{d\times d}$ and $\Psi_2:\R^{d\times d-1}\rightarrow\R^{d\times d}$ satisfy Assumption \ref{Assump:Psi}.
  For $R>0$ and $F\in\R^{d\times d}$, the mapping $K_R \ni \sigma \mapsto |\sigma-F|^2 \in \R$ achieves its minimum at $\sigma=\Pop{R}(F)$.
\end{Lem}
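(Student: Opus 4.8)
The plan is to verify directly that $\sigma^\ast:=\Pop{R}(F)$ is the minimizer, by showing it lies in $K_R$ and then bounding $|\sigma-F|^2$ from below for an arbitrary competitor $\sigma\in K_R$ through an expansion of the square that reduces everything to the obtuse-angle inequality already established in Proposition \ref{Prop:Phi}(iii).

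First I would check that $\sigma^\ast\in K_R$. Since $E_d^D=0$, the deviatoric part of $\sigma^\ast=\tfrac{\tr F}{d}E_d+R\Phi(F^D/R)$ is $(\sigma^\ast)^D=R\Phi(F^D/R)$ (the second summand being trace-free by Proposition \ref{Prop:Phi}(i)), and $|R\Phi(F^D/R)|\le R$ because $|\Phi(\cdot)|\le1$ by the definition of $\Phi$; hence $|(\sigma^\ast)^D|\le R$, i.e. $\sigma^\ast\in K_R$.

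Next, for any $\sigma\in K_R$ I would expand
\[
  |\sigma-F|^2 = |\sigma-\sigma^\ast|^2 + 2\,(\sigma-\sigma^\ast,\ \sigma^\ast-F) + |\sigma^\ast-F|^2 .
\]
The first term is nonnegative, and the cross term equals $-2\,(\sigma^\ast-F,\ \sigma^\ast-\sigma)$, which is nonnegative by Proposition \ref{Prop:Phi}(iii) applied with $A:=F$ and $B:=\sigma$ (admissible since $|\sigma^D|\le R$). Therefore $|\sigma-F|^2\ge|\sigma^\ast-F|^2$ for all $\sigma\in K_R$, which is the assertion; strict convexity of $\sigma\mapsto|\sigma-F|^2$ together with convexity of $K_R$ (Proposition \ref{prop:KR}) additionally gives that the minimizer is unique.

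The only point needing a word of care — and the single, mild obstacle — is that Proposition \ref{Prop:Phi} is stated for symmetric matrices while $F$ and $K_R$ live in $\R^{d\times d}$; I would simply remark that the proof of part (iii) uses only Frobenius-inner-product identities (orthogonality of $E_d$ to deviatoric parts, Cauchy--Schwarz) that hold verbatim on all of $\R^{d\times d}$, so it applies with $A=F\in\R^{d\times d}$. Alternatively one can bypass this entirely by splitting along the orthogonal decomposition $\R^{d\times d}=E_d\R\oplus\{A\in\R^{d\times d}:\tr A=0\}$, writing $|\sigma-F|^2=\tfrac1d|\tr(\sigma-F)|^2+|\sigma^D-F^D|^2$, noting that the constraint defining $K_R$ touches only the second summand, and minimizing the first by matching traces and the second by the explicit nearest-point projection of $F^D$ onto the closed ball of radius $R$, namely $R\Phi(F^D/R)$.
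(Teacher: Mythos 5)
Your argument is correct, and it takes a genuinely different route from the paper. You verify that $\Pop{R}(F)\in K_R$ and then reduce the minimality claim to the obtuse-angle inequality of Proposition \ref{Prop:Phi}(iii) via the expansion $|\sigma-F|^2=|\sigma-\Pop{R}(F)|^2+2(\sigma-\Pop{R}(F),\Pop{R}(F)-F)+|\Pop{R}(F)-F|^2$; since Proposition \ref{Prop:Phi}(iii) is exactly the variational characterization of the nearest-point projection onto $\{|\cdot^D|\le R\}$ and its proof is an independent direct computation, there is no circularity, and your observation that this computation uses only Frobenius-inner-product identities (orthogonality of $E_d$ to trace-free matrices, Cauchy--Schwarz) and hence extends verbatim from $\Sym_d$ to $\R^{d\times d}$ correctly disposes of the only delicate point, namely that the appendix works with $K_R\subset\R^{d\times d}$ while $\Pop{R}$ and Proposition \ref{Prop:Phi} were stated on $\Sym_d$. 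The paper instead argues through Proposition \ref{prop:KR}: it decomposes every competitor as $\Psi_1(\mu)+\Psi_2(y)$ with $|y|\le R$, splits $|\tau-F|^2$ orthogonally into a trace part and a deviatoric part, and minimizes each component explicitly, recovering the closed-form projection $\frac{\tr F}{d}E_d+\frac{R}{|F^D|}F^D$ when $|F^D|>R$; this is essentially the alternative you sketch at the end. Your main route is shorter and reuses an already-proved lemma, while the paper's route is self-contained within the appendix and simultaneously exhibits the cylinder structure of $K_R$ (the point of Assumption \ref{Assump:Psi} and Proposition \ref{prop:KR}), which is the expository goal there. The uniqueness remark you add is not claimed in the lemma and is harmless, though convexity of $K_R$ is immediate from its definition rather than from Proposition \ref{prop:KR}.
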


\begin{proof}
  First, we consider the case where $F\in K_R$. The mapping $\R^{3\times 3} \ni \sigma \mapsto |\sigma-F|^2 \in \R$ achieves its minimum value of $0$ at $\sigma=F$. Hence, if $F \in K_R$, then the mapping $K_R \ni \sigma \mapsto |\sigma-F|^2 \in \R$ achieves its minimum at $\sigma = F$.

  Next, we consider the case where $F \notin K_R$. By Proposition \ref{prop:KR}, for all $\tau\in K_R$, there exist unique $\mu\in\R$ and $y\in\R^{d\times d-1}$ such that $\tau = \Psi_1(\mu) + \Psi_2(y)$ and $|y| \le R$. 
  \begin{align*}
      |\tau - F|^2 &= \left|\Psi_1(\mu) + \Psi_2(y) - \frac{\tr F}{d}E_d - F^D\right|^2\\
      &= \left|\Psi_1(\mu) - \frac{\tr F}{d}E_d\right|^2 + \left|\Psi_2(y) - F^D\right|^2,
  \end{align*}
  where we have used $\Psi_1(\mu) - (\tr F/d)E_d \in E_d\R$ and $\Psi_2(y) - F^D \in \{A\in\R^{d\times d} : \tr A = 0\}$.
  Here, $|\Psi_1(\mu) - (\tr F/d)E_d|^2$ depends on only $\mu$ and $|\Psi_2(y) - F^D|^2$ depends on only $y$. 
  
  By Assumption \ref{Assump:Psi} (iii), there exist $\lambda\in\R$ and $z\in\R^{d\times d-1}$ such that $\Psi_1(\lambda) = (\tr F/d)E_d$ and $\Psi_2(z) = F^D$. The mapping $\R\ni\mu\mapsto|\Psi_1(\mu) - (\tr F/d)E_d|^2\in\R$ achieves its minimum at $\mu=\lambda$. By Assumption \ref{Assump:Psi} (i), the mapping $\{y\in\R^{d\times d-1}:|y| \le R\}\ni y\mapsto|\Psi_2(y) - F^D|^2 = |\Psi_2(y) - \Psi_2(z)|^2 = |y-z|^2$ achieves its minimum at $y = (R/|z|)z$. 
  
  Hence, if $F\notin K_R$, then the mapping $K_R \ni \sigma \mapsto |\sigma-F|^2 \in \R$ achieves its minimum at 
  \[
      \sigma 
      = \Psi_1(\lambda) + \Psi_2\left(\frac{R}{|z|}z\right)
      = \frac{\tr F}{d}E_d + \frac{R}{|\Psi_2(z)|}\Psi_2(z)
      = \frac{\tr F}{d}E_d + \frac{R}{|F^D|}F^D.
  \]
  Therefore, the mapping $K_R \ni \sigma \mapsto |\sigma-F|^2 \in \R$ achieves its minimum at 
  \[
      \sigma = \left\{\begin{aligned}
          &F && \mbox{if } |F^D|\le R\\
          &\frac{\tr F}{d}E_d + \frac{R}{|F^D|}F^D
          && \mbox{if } |F^D| > R
      \end{aligned}\right.
  \]
  i.e. $\sigma = \Pop{R}(F)$
\end{proof}

From Lemma \ref{Lem:min}, one can obtain the following result.
\begin{Th}\label{th:min}
  Let $\dt>0$, $\sigma_a, \sigma_b, h, p\in\Hs$, $v\in\Vv$, and $g\in L^2(\Omega)$. It holds that 
  \[
      \frac{\sigma_b - \sigma_a}{\dt} 
      \in \E(v) + h - \partial I_K(\sigma_b)
      \Leftrightarrow 
      \sigma_b + p = \Pop{g}(\sigma_a + \dt(\E(v)+h) + p),
  \]
  where $K \defeq \tilde{K}-p$ and 
  \[
      \tilde{K} \defeq \left\{\tau\in\Hs : |\tau^D| \le g \mbox{ a.e. in }\Omega\right\}.
  \]
\end{Th}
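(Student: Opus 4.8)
The plan is to reduce the statement, which is posed in the Hilbert space $\Hs = L^2(\Omega;\Sym_d)$, to the pointwise (finite-dimensional) situation handled by Lemma~\ref{Lem:min}, and then to translate between the inclusion form and the projection form using the variational characterization of the subdifferential of an indicator function. First I would recall the elementary fact that for a closed convex set $K\subset\Hs$ and $w\in K$, one has $\zeta\in\partial I_K(w)$ if and only if $(\zeta,\tau-w)_\Hs\le 0$ for all $\tau\in K$; applying this with $w=\sigma_b$ and $\zeta = \E(v)+h-(\sigma_b-\sigma_a)/\dt$, the left-hand inclusion becomes, after multiplying by $\dt>0$ and rearranging,
\[
  \bigl(\sigma_a+\dt(\E(v)+h)-\sigma_b,\ \tau-\sigma_b\bigr)_\Hs\le 0
  \qquad\text{for all }\tau\in K,
\]
together with the membership $\sigma_b\in K$. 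This is exactly the statement that $\sigma_b$ is the $\Hs$-orthogonal projection of the fixed element $F\defeq\sigma_a+\dt(\E(v)+h)$ onto $K$, i.e. $\sigma_b=\arg\min_{\tau\in K}\|\tau-F\|_\Hs^2$.

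\textbf{Reduction to $\tilde K$ by translation.} Next I would remove the shift $p$. Since $K=\tilde K-p$, the change of variables $\tau\mapsto\tau+p$ identifies the projection of $F$ onto $K$ with the projection of $F+p$ onto $\tilde K$ shifted back by $p$; concretely, $\sigma_b$ is the projection of $F$ onto $K$ if and only if $\sigma_b+p$ is the projection of $F+p$ onto $\tilde K$. Thus the claim is equivalent to
\[
  \sigma_b+p \;=\; \operatorname*{arg\,min}_{\tau\in\tilde K}\ \|\tau-(F+p)\|_\Hs^2,
  \qquad F+p=\sigma_a+\dt(\E(v)+h)+p,
\]
and it remains to show this minimizer equals $\Pop{g}(\sigma_a+\dt(\E(v)+h)+p)$.

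\textbf{Pointwise minimization.} Here I would exploit that $\tilde K=\{\tau\in\Hs:|\tau^D(x)|\le g(x)\text{ a.e.}\}$ is defined by a pointwise constraint, so the functional $\tau\mapsto\int_\Omega|\tau(x)-G(x)|^2\,dx$ (with $G\defeq F+p$) decouples over $x$: a candidate $\tau$ lies in $\tilde K$ iff $\tau(x)\in K_{g(x)}$ for a.e.\ $x$, and minimizing the integral is equivalent to minimizing $|\tau(x)-G(x)|^2$ over $K_{g(x)}$ for a.e.\ $x$ separately. For a.e.\ $x$ with $g(x)>0$, Lemma~\ref{Lem:min} (applied in $\R^{d\times d}$, identifying $\Sym_d$ with an appropriate subspace, or more directly using Proposition~\ref{Prop:Phi}(iii) which gives the variational inequality characterizing $\Pop{g(x)}$) shows the unique minimizer is $\Pop{g(x)}(G(x))$; for a.e.\ $x$ with $g(x)=0$ the constraint set is $\{A:\operatorname{tr}A\text{ free},\ A^D=0\}=E_d\R$, on which the nearest point to $G(x)$ is $(\operatorname{tr}G(x)/d)E_d=\Pop{0}(G(x))$, matching the definition of $\Pop{R}$ at $R=0$. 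Assembling the pointwise minimizers gives the element $\Pop{g}(G)\in\Hs$ (it is measurable since $\Pop{R}(A)$ is continuous in $(R,A)$ and $g$ is measurable, and it lies in $\tilde K$ by Proposition~\ref{Prop:Phi}(i)), and by the decoupling this is the unique $\Hs$-minimizer. Reading the chain of equivalences backwards yields the theorem.

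\textbf{Main obstacle.} The routine parts are the subdifferential characterization and the translation argument; the one step needing care is the \emph{decoupling / measurable selection}: justifying rigorously that minimizing the $L^2$ integral over the pointwise-constrained set is the same as minimizing the integrand a.e.\ and that the resulting pointwise minimizer is measurable and belongs to $\Hs$. This is where one must invoke that $\Pop{R}(A)$ depends continuously on $(R,A)$ (so $x\mapsto\Pop{g(x)}(G(x))$ is measurable as a composition), and that the pointwise lower bound $|\tau(x)-G(x)|^2\ge|\Pop{g(x)}(G(x))-G(x)|^2$ integrates to the global bound with equality only at $\Pop{g}(G)$. Everything else reduces to Lemma~\ref{Lem:min} and Proposition~\ref{Prop:Phi}.
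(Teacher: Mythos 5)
Your proof takes essentially the same route as the paper's: you use the subdifferential characterization to recast the inclusion as the variational inequality $(F - \sigma_b, \tau - \sigma_b)_\Hs \le 0$ for all $\tau \in K$ together with $\sigma_b\in K$ (i.e., $\sigma_b$ is the $\Hs$-projection of $F \defeq \sigma_a + \dt(\E(v)+h)$ onto $K$), shift by $p$ to reduce the projection onto $K$ to one onto $\tilde{K}$, and then identify that projection with $\Pop{g}$ via the pointwise result in Lemma \ref{Lem:min}. The only difference is that you explicitly justify the pointwise decoupling and the measurability of $x\mapsto\Pop{g(x)}(G(x))$ (and the $g(x)=0$ case), steps the paper leaves implicit when it simply cites Lemma \ref{Lem:min}.
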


\begin{proof}
  We put $F \defeq \sigma_a+\dt(\E(v)+h)$. By the definition of subdifferential, it holds that 
  \begin{align*}
      &
      \frac{\sigma_b - \sigma_a}{\dt} 
      \in \E(v) + h - \partial I_K(\sigma_b) \\
      \Leftrightarrow&
      I_K(\tau) - I_K(\sigma_b) 
      \ge \left(\E(v) + h - \frac{\sigma_b - \sigma_a}{\dt}, \tau - \sigma_b \right) \mbox{ for all }\tau\in\Hs \\
      \Leftrightarrow&
      \sigma_b\in K\mbox{ and }
      0\ge \left(F - \sigma_b, \tau - \sigma_b \right) \mbox{ for all }\tau \in K\\
      \Leftrightarrow&
      \sigma_b\in K\mbox{ and }
      \|\sigma_b - F\|_\Hs \le \|\tau - F\|_\Hs \mbox{ for all }\tau \in K.
  \end{align*}
  Since $\tau\in K = \tilde{K}-p \Leftrightarrow \tau + p\in \tilde{K}$, we have 
  \begin{align*}
      &
      \frac{\sigma_b - \sigma_a}{\dt} 
      \in \E(v) + h - \partial I_K(\sigma_b) \\
      \Leftrightarrow&
      \sigma_b\in K\mbox{ and }
      \|\sigma_b - (F+p)\|_\Hs \le \|\tau - (F+p)\|_\Hs \mbox{ for all }\tau \in \tilde{K}.
  \end{align*}
  Therefore, by Lemma \ref{Lem:min}, 
  \begin{align*}
      \frac{\sigma_b - \sigma_a}{\dt} 
      \in \E(v) + h - \partial I_K(\sigma_b) 
      \Leftrightarrow
      \sigma_b = \Pop{g}(F+p)
      = \Pop{g}(\sigma_a + \dt(\E(v)+h) + p).
  \end{align*}
\end{proof}



\end{document}